\documentclass[a4paper, 10pt, notitlepage]{article}

\frenchspacing
\righthyphenmin=2
\sloppy

\usepackage{amsthm, amsmath, amssymb, latexsym}
\usepackage{mathrsfs}

\theoremstyle{plain}
\newtheorem{theorem}{Theorem}
\newtheorem{lemma}{Lemma}
\newtheorem{corollary}{Corollary}



\theoremstyle{definition}
\newtheorem{definition}{Definition}
\newtheorem{example}{Example}

\theoremstyle{remark}
\newtheorem{remark}{Remark}

\DeclareMathOperator{\dom}{dom}
\DeclareMathOperator{\sign}{sign}
\DeclareMathOperator{\co}{co}

\author{M.V. Dolgopolik}
\title{Smooth exact penalty functions II: a reduction to standard exact penalty functions}

\begin{document}

\maketitle

\begin{abstract}
A new class of smooth exact penalty functions was recently introduced by Huyer and Neumaier. In this paper, we prove
that the new smooth penalty function for a constrained optimization problem is exact if and only if the standard
nonsmooth penalty function for this problem is exact. We also provide some estimates of the exact penalty parameter of
the smooth penalty function, and, in particular, show that it asymptotically behaves as the square of the exact
penalty parameter of the standard $\ell_1$ penalty function. We briefly discuss a simple way to reduce the exact
penalty parameter of the smooth penalty function, and study the effect of nonlinear terms on the exactness of this
function.
\end{abstract}

\section{Introduction}

The method of exact penalty functions \cite{HanMangasarian, DiPilloGrippo, BurkeExPen, Zaslavski} is a very appealing
technique for solving various constrained optimization problems, since it allows one to replace a constrained problem by
a single unconstrained optimization problem having the same optimal solutions. However, the equivalent unconstrained
problem is usually nonsmooth (even if the original problem is smooth), which makes the method of exact penalty functions
less attractive, especially for practitioners who are often not familiar with efficient methods for solving complicated
nonsmooth optimization problems.

Huyer and Neumaier \cite{HuyerNeumaier} proposed a new approach to exact penalization that allows one to overcome
nonsmoothness of exact penalty functions. Later on, this approach was modified \cite{Bingzhuang, WangMaZhou}, and
successfully applied to various constrained optimization and optimal control problems \cite{MaLiYiu, LinWuYu, LiYu,
JianLin, LinLoxton}. A new general approach to the construction and analysis of smooth exact penalty functions was
proposed in \cite{Dolgopolik}.

Let us recall the definition of the exact penalty function from \cite{WangMaZhou}. Consider the following constrained
optimization problem
\begin{equation} \label{NonlinearProgr}
  \min f(x) \quad \text{subject to} \quad F(x) = 0, \quad x \in [\underline{x}, \overline{x}],
\end{equation}
where $f \colon \mathbb{R}^n \to \mathbb{R}$ and $F \colon \mathbb{R}^n \to \mathbb{R}^m$ are smooth functions,
$\underline{x}, \overline{x} \in \mathbb{R}^n$ are given vectors, and
$$
  [\underline{x}, \overline{x}] = \big\{ x = (x_1, \ldots, x_n) \in \mathbb{R}^n \mid 
  \underline{x}_i \le x_i \le \overline{x}_i \quad \forall i \in \{ 1, \ldots, n \} \big\}.
$$
Given $a \in (0, + \infty]$, let $\phi \colon [0, a) \to [0, + \infty)$ be a convex continuously differentiable function
such that $\phi(0) = 0$ and $\phi'(t) > 0$ for all $t \in [0, a)$. Let also $w \in \mathbb{R}^m$ be arbitrary, and
$\beta \colon [0, \overline{\varepsilon}] \to [0, + \infty)$ with $\overline{\varepsilon} > 0$ be a continuously
differentiable nondecreasing function such that $\beta(t) = 0$ iff $t = 0$. Then one defines the new ``smooth'' penalty
function for the problem (\ref{NonlinearProgr}) as follows
\begin{equation} \label{Def_SmoothExactPenFunc}
  F_{\lambda}(x, \varepsilon) = \begin{cases}
    f(x), & \text{ if } \varepsilon = \Delta (x, \varepsilon) = 0, \\
    f(x) + \frac{1}{2 \varepsilon} \phi(\Delta(x, \varepsilon)) + \lambda \beta(\varepsilon), & \text{ if }
    \varepsilon > 0, \: \Delta(x, \varepsilon) < a, \\
    + \infty, &  \text{otherwise}.
  \end{cases}
\end{equation}
where $\lambda \ge 0$ is the penalty parameter, $\Delta(x, \varepsilon) = \| F(x) - \varepsilon w \|^2$ is the
constraint violation measure.  Finally, one replaces the problem (\ref{NonlinearProgr}) with the penalized problem 
\begin{equation} \label{FirstPenProblem}
  \min_{x, \varepsilon} F_{\lambda}(x, \varepsilon) \quad \text{subject to} \quad 
  (x, \varepsilon) \in [\underline{x}, \overline{x}] \times [0, \overline{\varepsilon}].
\end{equation}
Observe that the penalty function $F_{\lambda}(x, \varepsilon)$ depends on the additional parameter 
$\varepsilon \ge 0$, and $F_{\lambda}(x, \varepsilon)$ is smooth for any $\varepsilon \in (0, \overline{\varepsilon)}$
and $x$ such that $0 < \Delta(x, \varepsilon) < a$. Therefore one can apply standard algorithms of smooth optimization
to the penalty function (\ref{Def_SmoothExactPenFunc}) in order to find a globally/locally optimal solution of
penalized problem (\ref{FirstPenProblem}), which under natural assumptions (namely, constraint qualification) has the
form $(x^*, 0)$, where $x^*$ is a globally/locally optimal solution of problem (\ref{NonlinearProgr}). However, it
should be noted that the standard proofs of the exactness of the smooth penalty function 
(\ref{Def_SmoothExactPenFunc}) (i.e. proofs of the fact that all local and global minimizers of
(\ref{FirstPenProblem}) have the form $(x^*, 0)$) are rather complicated, and overburdened by technical details. A new
simple proof of the exactness of the penalty function (\ref{Def_SmoothExactPenFunc}) was given in \cite{Dolgopolik}.

The aim of this article is to continue the work started in \cite{Dolgopolik}, and present new simple methods for
studying the exactness of the penalty function of the form (\ref{Def_SmoothExactPenFunc}). Namely, we prove that
this penalty function is exact if an only if the standard nonsmooth $\ell_1$ penalty function for problem
(\ref{NonlinearProgr}) is exact, and provide some estimates of the exact penalty parameter of the penalty
function \eqref{Def_SmoothExactPenFunc} via the exact penalty parameter of the nonsmooth penalty function. In
particular, we demonstrate that the exact penalty parameter of the penalty function (\ref{Def_SmoothExactPenFunc}) with
$\varphi(t) \equiv \beta(t) \equiv t$ asymptotically behaves like the square of the exact penalty parameter of the
$\ell_1$ penalty function. We also discuss how to make the exact penalty parameter of the penalty
function (\ref{Def_SmoothExactPenFunc}) significantly smaller, and study how the nonlinear functions $\phi$ and $\beta$
affect the exactness of this penalty function. 

The paper is organised as follows. In Section~\ref{LinearCase}, we study the case $\phi(t) \equiv \beta(t) \equiv t$ in
detail. We prove that in this case the smooth penalty function is (locally or globally) exact if and only if the
corresponding nonsmooth penalty function is (locally or globally) exact, and provide several estimates of the exact
penalty parameter. In Section~\ref{NonlinearCase}, we study the effect of the nonlinear functions $\phi$ and $\beta$ on
the exactness of the penalty function \eqref{Def_SmoothExactPenFunc}.

\section{A reduction to standard exact penalty functions}
\label{LinearCase}

Let $X$ be a topological space, $(Y, d)$ be a metric space $f \colon X \to \mathbb{R} \cup \{ + \infty \}$ be a given
function, $\Phi \colon X \rightrightarrows Y$ be a set-valued mapping with closed values, and $A \subset X$ be a
nonempty set. Hereafter, we study the following optimization problem:
$$
  \min f(x) \quad \text{subject to} \quad y_0 \in \Phi(x), \quad x \in A,	\eqno{(\mathcal{P})}
$$
where $y_0 \in Y$ is a fixed element. Denote by $\Omega = \Phi^{-1}(y_0) \cap A$ the set of feasible points of the
problem ($\mathcal{P}$). Denote also $\dom f = \{ x \in X \mid f(x) < +\infty \}$. We suppose that 
$\Omega \cap \dom f \ne \emptyset$, and the function $f$ is bounded below on $\Omega$.

Introduce the ``smooth'' penalty function for the problem ($\mathcal{P}$):
$$
  F_{\lambda}(x, \varepsilon) = 
  f(x) + \varepsilon^{-1} d(y_0, \Phi(x))^2 + \lambda \varepsilon \quad \forall \varepsilon > 0,
$$
where $d(y_0, \Phi(x)) = \inf_{z \in \Phi(x)} d(y_0, z)$. From this point onwards, for any penalty function
$F_{\lambda}$ we suppose that $F_{\lambda}(x, 0) = f(x)$, if $x$ is feasible, and $F_{\lambda}(x, 0) = + \infty$
otherwise.

Alongside the problem ($\mathcal{P}$) we study the following extended penalized problem
\begin{equation} \label{PenalizedProblem}
  \min_{x, \varepsilon} F_{\lambda}(x, \varepsilon) \quad \text{subject to} \quad x \in A, \quad \varepsilon \ge 0.
\end{equation}
Note that only the constraint $y_0 \in \Phi(x)$ is included into the penalty function $F_{\lambda}$, while the
constraint $x \in A$ is taken into account explicitly.

Let us recall the concept of exactness of a penalty function that connects the initial problem ($\mathcal{P}$) with the
penalized problem (\ref{PenalizedProblem}). Denote $\mathbb{R}_+ = [0, + \infty)$.

\begin{definition}
Let $x^* \in \dom f$ be a point of local minimum of the problem ($\mathcal{P}$). The penalty function $F_{\lambda}$ is
said to be (locally) \textit{exact} at the point $x^*$ (or, to be more precise, at the point $(x^*, 0)$), if there
exists $\lambda \ge 0$ such that $(x^*, 0)$ is a point of local minimum of $F_{\lambda}$ on the set $A \times
\mathbb{R}_+$. The greatest lower bound of all such $\lambda$ is denoted by $\lambda^*(x^*)$ and is referred to as
\textit{the exact penalty parameter} (ex.p.p.) of the penalty function $F_{\lambda}$ at $x^*$.
\end{definition}

\begin{definition}
The penalty function $F_{\lambda}$ is called (globally) \textit{exact}, if there exists $\lambda \ge 0$ such that the
penalty function $F_{\lambda}$ attains a global minimum on the set $A \times \mathbb{R}_+$, and if 
$(x^*, \varepsilon^*)$ is a point of global minimum of $F_{\lambda}$ on $A \times \mathbb{R}_+$, then 
$\varepsilon^* = 0$. The greatest lower bound of all such $\lambda \ge 0$ is denoted by $\lambda^*$ and is referred to
as \textit{the exact penalty parameter} of the penalty fucntion $F_{\lambda}$.
\end{definition}

Thus, if the penalty function $F_{\lambda}$ is globally exact, then the problem ($\mathcal{P}$) and the penalized
problem (\ref{PenalizedProblem}) have the same globally optimal solutions for any sufficiently large $\lambda \ge 0$. To
be more precise, if $F_{\lambda}$ is globally exact and $\lambda > \lambda^*$, then $(x^*, \varepsilon^*)$ is a globally
optimal solution of the problem (\ref{PenalizedProblem}) iff $\varepsilon = 0$ and $x^*$ is a globally optimal solution
of the problem ($\mathcal{P}$). In other words, the global exactness of a penalty function means that the penalization
does not distort any information about globally optimal solutions of the original problem.

\begin{remark} \label{Rmrk_ExactPenCond}
One can show that the penalty function $F_{\lambda}$ is exact iff there exists $\lambda \ge 0$ such that
\begin{equation} \label{ExactPenCond}
  \inf_{x \in A, \varepsilon \ge 0} F_{\lambda}(x, \varepsilon) = \inf_{x \in \Omega} f(x),
\end{equation}
and $f$ attains a global minimum on $\Omega$. Furthermore, the greatest lower bound of all $\lambda \ge 0$ satisfying
(\ref{ExactPenCond}) is equal to the ex.p.p. $\lambda^*$.
\end{remark}

The two following theorems demonstrate that the penalty function $F_{\lambda}(x, \varepsilon)$ is exact if and only if
the standard penalty function $G_{\sigma}(x) = f(x) + \sigma d(y_0, \Phi(x))$ is exact. Thus, these theorems allow one
to apply a wide variety of methods of the theory of nonsmooth exact penalty functions \cite{HanMangasarian,
DiPilloGrippo,BurkeExPen, Zaslavski, Burke, DemyanovDiPilloFacchinei, Demyanov} to the study of the penalty function
$F_{\lambda}(x, \varepsilon)$.

\begin{theorem} \label{Thrm_LocalExactEquiv_Simple}
Let $x^* \in \dom f$ be a point of local minimum of the problem ($\mathcal{P}$), and the mapping $d(y_0, \Phi(\cdot))$
be continuous at $x^*$. Then the penalty function $F_{\lambda}(x, \varepsilon)$ is exact at $x^*$ if and only if 
the penalty function $G_{\sigma}(x) = f(x) + \sigma d(y_0, \Phi(x))$ is exact at this point, and
$$
  \lambda^*(x^*) = \frac{\sigma^*(x^*)^2}{4},
$$
where $\sigma^*(x^*)$ is the ex.p.p. of $G_{\sigma}$ at $x^*$.
\end{theorem}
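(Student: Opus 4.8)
The plan is to reduce the smooth penalty function to the standard one by explicitly minimising $F_\lambda(x, \cdot)$ over the auxiliary variable $\varepsilon$. Write $r(x) = d(y_0, \Phi(x))$, so that $r(x) \ge 0$ and $r(x^*) = 0$, since $x^*$ is feasible. For fixed $x$ with $r(x) > 0$ and fixed $\lambda > 0$, the arithmetic--geometric mean inequality gives
\[
  F_\lambda(x, \varepsilon) = f(x) + \varepsilon^{-1} r(x)^2 + \lambda \varepsilon \ge f(x) + 2 \sqrt{\lambda}\, r(x),
\]
with equality precisely at $\varepsilon = r(x) / \sqrt{\lambda}$; for $r(x) = 0$ the infimum over $\varepsilon \ge 0$ equals $f(x)$ and is attained at $\varepsilon = 0$. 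Hence in all cases $\inf_{\varepsilon \ge 0} F_\lambda(x, \varepsilon) = f(x) + 2\sqrt{\lambda}\, r(x) = G_{2\sqrt{\lambda}}(x)$. This single identity, together with the substitution $\sigma = 2\sqrt{\lambda}$ (equivalently $\lambda = \sigma^2/4$), is the engine of the whole proof and already explains the factor $1/4$ in the claimed formula.

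For the ``if'' direction I would argue as follows. Suppose $G_\sigma$ is exact at $x^*$ and fix any $\sigma > \sigma^*(x^*)$; by monotonicity of $G_\sigma$ in $\sigma$ (which holds because $r \ge 0$ and $r(x^*) = 0$) there is a neighbourhood $U$ of $x^*$ in $A$ with $G_\sigma(x) \ge G_\sigma(x^*) = f(x^*)$ for all $x \in U$. Setting $\lambda = \sigma^2/4$, the identity above yields $F_\lambda(x, \varepsilon) \ge \inf_{\varepsilon' \ge 0} F_\lambda(x, \varepsilon') = G_\sigma(x) \ge f(x^*) = F_\lambda(x^*, 0)$ for every $x \in U$ and every $\varepsilon \ge 0$. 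Thus $(x^*, 0)$ minimises $F_\lambda$ on the neighbourhood $U \times \mathbb{R}_+$ of $(x^*, 0)$, so $F_\lambda$ is exact at $x^*$ with $\lambda^*(x^*) \le \sigma^2/4$; letting $\sigma \downarrow \sigma^*(x^*)$ gives $\lambda^*(x^*) \le \sigma^*(x^*)^2/4$. Note that this direction needs no continuity hypothesis, since the bound holds for all $\varepsilon \ge 0$ simultaneously.

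The ``only if'' direction is where the continuity of $d(y_0, \Phi(\cdot))$ at $x^*$ is essential, and I expect it to be the main obstacle. Suppose $F_\lambda$ is exact and fix $\lambda > \lambda^*(x^*)$; then there exist a neighbourhood $U$ of $x^*$ in $A$ and $\delta > 0$ such that $F_\lambda(x, \varepsilon) \ge f(x^*)$ for all $(x, \varepsilon) \in U \times [0, \delta)$. To recover a bound on $G_\sigma$ with $\sigma = 2\sqrt{\lambda}$, I want to evaluate $F_\lambda$ at the minimising value $\varepsilon = r(x)/\sqrt{\lambda}$, but this is legitimate only if that value lies in $[0, \delta)$. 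Here continuity enters: because $r(x^*) = 0$ and $r$ is continuous at $x^*$, I can shrink $U$ to a neighbourhood $U'$ on which $r(x) < \delta \sqrt{\lambda}$, forcing $r(x)/\sqrt{\lambda} < \delta$. Then for $x \in U'$ with $r(x) > 0$ the pair $(x, r(x)/\sqrt{\lambda})$ lies in $U \times [0, \delta)$, so $G_\sigma(x) = F_\lambda(x, r(x)/\sqrt{\lambda}) \ge f(x^*)$, while for feasible $x \in U'$ one has $G_\sigma(x) = f(x) = F_\lambda(x, 0) \ge f(x^*)$ directly. Hence $x^*$ is a local minimiser of $G_\sigma$, giving $\sigma^*(x^*) \le 2\sqrt{\lambda}$, and letting $\lambda \downarrow \lambda^*(x^*)$ yields $\sigma^*(x^*)^2/4 \le \lambda^*(x^*)$.

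Combining the two inequalities gives the exactness equivalence together with $\lambda^*(x^*) = \sigma^*(x^*)^2/4$. The only remaining routine points are the monotonicity of $F_\lambda$ in $\lambda$ and of $G_\sigma$ in $\sigma$ (used to justify the phrases ``for all $\lambda > \lambda^*(x^*)$'' and the passage to the infimum), and the degenerate case $\sigma^*(x^*) = 0$, in which the formula reads $\lambda^*(x^*) = 0$ and can be checked directly from the reduction identity. The continuity-based localisation of the minimising $\varepsilon$ in the ``only if'' part is the single delicate step; everything else is bookkeeping around that identity.
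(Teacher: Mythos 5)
Your proof is correct and follows essentially the same route as the paper: both rest on the reduction identity $\inf_{\varepsilon \ge 0} F_{\lambda}(x, \varepsilon) = G_{2\sqrt{\lambda}}(x)$ (the paper obtains it by differentiating in $\varepsilon$, you by the AM--GM inequality) and both use the continuity of $d(y_0, \Phi(\cdot))$ at $x^*$ precisely to force the minimizing value $\varepsilon = d(y_0,\Phi(x))/\sqrt{\lambda}$ into the neighbourhood $[0,\delta)$ supplied by the exactness of $F_{\lambda}$. The only differences are cosmetic (order of the two implications and the method of minimizing over $\varepsilon$), so nothing further is needed.
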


\begin{proof}
Suppose that the penalty function $F_{\lambda}(x, \varepsilon)$ is exact at $x^*$, and fix an arbitrary 
$\lambda > \lambda^*(x^*)$. Then there exist a neighbourhood $U$ of $x$ and $\varepsilon_0 > 0$ such that
\begin{equation} \label{LocalExactnessDef}
  F_{\lambda}(x, \varepsilon) \ge F_{\lambda}(x^*, 0) \quad 
  \forall x \in U \cap A \quad \forall \varepsilon \in (0, \varepsilon_0].
\end{equation}
Since $x^*$ is a point of local minimum of the problem ($\mathcal{P}$), $x^*$ is feasible, i.e. 
$d(y_0, \Phi(x^*)) = 0$. Therefore applying the continuity of the function $d(y_0, \Phi(\cdot))$ at $x^*$ one gets that
there exists a neighbourhood $V$ of $x^*$ such that $V \subset U$ and
\begin{equation} \label{ContinSetValuedMap}
  d(y_0, \Phi(x)) < \sqrt{\lambda} \varepsilon_0 \quad \forall x \in V.
\end{equation}
From (\ref{LocalExactnessDef}) it follows that
\begin{equation} \label{MinLocalExactnessDef}
  \inf_{\varepsilon \in (0, \varepsilon_0)} F_{\lambda}(x, \varepsilon) \ge F_{\lambda}(x^*, 0) = f(x^*)
  \quad \forall x \in V \cap A.
\end{equation}
Let us compute the infimum on the left-hand side. If $x \in V \cap A$ is such that $y_0 \in \Phi(x)$, then
$F_{\lambda}(x, \varepsilon) = f(x) + \lambda \varepsilon$ for any $\varepsilon \ge 0$, and the infimum is equal to
$f(x)$. On the other hand, if $y_0 \notin \Phi(x)$, then
$$
  F_{\lambda}(x, \varepsilon) = f(x) + \frac{1}{\varepsilon} d(y_0, \Phi(x))^2 + \lambda \varepsilon \quad \forall
  \varepsilon > 0.
$$
Differentiating with respect to $\varepsilon$ one gets
$$
  \frac{d}{d \varepsilon} F_{\lambda}(x, \varepsilon) = - \frac{1}{\varepsilon^2} d(y_0, \Phi(x))^2 + \lambda
$$
Hence $F_{\lambda}(x, \varepsilon)$ decreases on $(0, \overline{\varepsilon})$ and increases on 
$(\overline{\varepsilon}, +\infty)$, where
$$
  \overline{\varepsilon} = \frac{d(y_0, \Phi(x))}{\sqrt{\lambda}}.
$$
Thus, $\overline{\varepsilon}$ is a point of global minimum of $F_{\lambda}(x, \cdot)$ on $(0, + \infty)$. Observe that
due to the choice of the neighbourhood $V$ (see (\ref{ContinSetValuedMap})) one has 
$\overline{\varepsilon} < \varepsilon_0$. Therefore
$$
  \min_{\varepsilon \in (0, \varepsilon_0)} F_{\lambda}(x, \varepsilon) = F_{\lambda}(x, \overline{\varepsilon}) =
  f(x) + 2 \sqrt{\lambda} d(y_0, \Phi(x)).
$$
Consequently, taking into account (\ref{MinLocalExactnessDef}) one obtains that
$$
  G_{2 \sqrt{\lambda}}(x) = \min_{\varepsilon \in (0, \varepsilon_0)} F_{\lambda}(x, \varepsilon) \ge
  f(x^*) = G_{2 \sqrt{\lambda}}(x^*) \quad \forall x \in V \cap A.
$$
Thus, the penalty function $G_{\lambda}$ is exact at $x^*$, and $2 \sqrt{\lambda} \ge \sigma^*(x^*)$, which due to the
arbitrary choice of $\lambda > \lambda^*(x^*)$ implies $2 \sqrt{\lambda^*(x^*)} \ge \sigma^*(x^*)$.

Suppose, now, that the penalty function $G_{\sigma}(x)$ is exact at $x^*$, and choose an arbitrary 
$\lambda > \sigma^*(x^*)^2 / 4$. Then there exists a neighbourhood $U$ of $x^*$ such that
$$
  G_{2 \sqrt{\lambda}}(x) \ge G_{2 \sqrt{\lambda}}(x^*) = f(x^*) \quad \forall x \in U \cap A.
$$
From the first part of the proof it follows that
$$
  \min_{\varepsilon > 0} F_{\lambda}(x, \varepsilon) = G_{2 \sqrt{\lambda}}(x).
$$
Therefore for any $x \in U \cap A$ and $\varepsilon \ge 0$ one has
$$
  F_{\lambda}(x, \varepsilon) \ge G_{2 \sqrt{\lambda}}(x) \ge f(x^*) = F_{\lambda}(x^*, 0).
$$
Thus, $F_{\lambda}(x, \varepsilon)$ is exact at $x^*$ and $\lambda^*(x^*) \le \sigma^*(x^*)^2 / 4$ by virtue of
the fact that $\lambda > \sigma^*(x^*)^2 / 4$ was chosen arbitrarily.	 
\end{proof}

Arguing in a similar way one can easily prove a global version of the theorem above.

\begin{theorem} \label{Thrm_GlobalExactEquiv_Simple}
The penalty function $F_{\lambda}(x, \varepsilon)$ is globally exact if and only if the penalty function 
$G_{\sigma}(x) = f(x) + \sigma d(y_0, \Phi(x))$ is globally exact, and
$$
  \lambda^* = \frac{(\sigma^*)^2}{4},
$$
where $\sigma^*$ is the ex.p.p. of $G_{\sigma}$.
\end{theorem}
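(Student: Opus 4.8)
The plan is to mirror the proof of Theorem~\ref{Thrm_LocalExactEquiv_Simple}, replacing the local neighbourhood argument by the global infimum characterisation of exactness recorded in Remark~\ref{Rmrk_ExactPenCond}. The engine of that proof --- the explicit minimisation of $F_{\lambda}(x, \cdot)$ over $\varepsilon \ge 0$ for a \emph{fixed} $x$ --- is purely pointwise and does not use any local structure, so I would reuse it verbatim to obtain a global reduction to $G_{\sigma}$.

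First I would record the key pointwise identity. For a fixed $x \in A$, the computation in the first part of the proof of Theorem~\ref{Thrm_LocalExactEquiv_Simple} shows that if $y_0 \notin \Phi(x)$, then $F_{\lambda}(x, \cdot)$ attains its global minimum on $(0, +\infty)$ at $\overline{\varepsilon} = d(y_0, \Phi(x)) / \sqrt{\lambda} > 0$ with value $f(x) + 2 \sqrt{\lambda}\, d(y_0, \Phi(x))$, whereas if $y_0 \in \Phi(x)$, then $F_{\lambda}(x, \varepsilon) = f(x) + \lambda \varepsilon$ is minimised at $\varepsilon = 0$ with value $f(x)$. In both cases
$$
  \min_{\varepsilon \ge 0} F_{\lambda}(x, \varepsilon) = f(x) + 2 \sqrt{\lambda}\, d(y_0, \Phi(x)) = G_{2 \sqrt{\lambda}}(x).
$$
Taking the infimum over $x \in A$ then yields
$$
  \inf_{x \in A, \, \varepsilon \ge 0} F_{\lambda}(x, \varepsilon) = \inf_{x \in A} G_{2 \sqrt{\lambda}}(x)
$$
for every $\lambda > 0$, which is the global analogue of the identity used in Theorem~\ref{Thrm_LocalExactEquiv_Simple}.

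Next I would apply Remark~\ref{Rmrk_ExactPenCond} to both functions. By \eqref{ExactPenCond}, $F_{\lambda}$ is globally exact iff there is $\lambda \ge 0$ with $\inf_{x \in A, \varepsilon \ge 0} F_{\lambda} = \inf_{x \in \Omega} f$ and $f$ attains its minimum on $\Omega$; the standard counterpart of that remark states that $G_{\sigma}$ is globally exact iff there is $\sigma \ge 0$ with $\inf_{x \in A} G_{\sigma} = \inf_{x \in \Omega} f$ and $f$ attains its minimum on $\Omega$. The displayed identity shows that these two conditions coincide under the substitution $\sigma = 2 \sqrt{\lambda}$. Hence, writing $S_F$ and $S_G$ for the sets of admissible parameters, one has $S_G = \{ 2 \sqrt{\lambda} : \lambda \in S_F \}$. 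Since $t \mapsto 2 \sqrt{t}$ is continuous and strictly increasing, $\sigma^* = \inf S_G = 2 \sqrt{\inf S_F} = 2 \sqrt{\lambda^*}$, i.e. $\lambda^* = (\sigma^*)^2 / 4$, and in particular each function is globally exact precisely when the other is.

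The hard part will be the attainment clause hidden in the definition of global exactness, namely that every global minimiser of $F_{\lambda}$ must have $\varepsilon = 0$. This is exactly what Remark~\ref{Rmrk_ExactPenCond} packages, but if one wishes to argue directly from the definition, the point to check is that for $\lambda > \lambda^*$ a global minimiser $(x, \varepsilon)$ with $\varepsilon > 0$ would force $x$ to be infeasible with $G_{2 \sqrt{\lambda}}(x) = \inf_{\Omega} f$, contradicting the strict inequality $2 \sqrt{\lambda} > \sigma^*$ together with the exactness of $G_{\sigma}$ (whose minimisers are then feasible). The remaining steps --- that attainment of the minimum of $f$ on $\Omega$ transfers unchanged, and that both infima are finite and equal --- are routine once the pointwise identity is in hand.
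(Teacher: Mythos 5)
Your proposal is correct and follows essentially the route the paper intends: the paper dispatches this theorem by ``arguing in a similar way'' to Theorem~\ref{Thrm_LocalExactEquiv_Simple}, i.e.\ by combining the pointwise identity $\inf_{\varepsilon \ge 0} F_{\lambda}(x, \varepsilon) = G_{2\sqrt{\lambda}}(x)$ with the infimum characterisation of global exactness from Remark~\ref{Rmrk_ExactPenCond}, which is exactly your argument (and is also how the paper handles the general case in Theorem~\ref{Thrm_EquivExactness_NonSimple}). Your observation that the continuity hypothesis of the local theorem becomes unnecessary --- because the minimisation over $\varepsilon$ is now unconstrained, so no bound $\overline{\varepsilon} < \varepsilon_0$ is needed --- matches the absence of that hypothesis in the statement.
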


Let us consider a simple particular case of the set-valued mapping $\Phi$. Namely, 
let $Y = \mathbb{R}^{m + l}$, $y_0 = 0$, and the set-valued mapping $\Phi$ have the form
\begin{equation} \label{SetValuedMap_MathProg}
  \Phi(x) = (h_1(x), \ldots, h_m(x)) \times \prod_{k = 1}^l [g_k(x), +\infty),
\end{equation}
where $h_i, g_k \colon X \to \mathbb{R}$ are given function, and $\prod$ stands for the Cartesian product. Thus, 
the inclusion $y_0 \in \Phi(x)$ is equivalent to the following system of equations and inequalities
$$
  h_i(x) = 0 \quad i \in \{ 1, \ldots, m \}, \quad g_k(x) \le 0 \quad k \in \{ 1, \ldots, l \}.
$$
Suppose that $Y$ is equipped with the Euclidean norm. Then, as it is easy to see, one has
$$
  d(y_0, \Phi(x)) = \sqrt{\sum_{i = 1}^m (h_i(x))^2 + \sum_{k = 1}^l \max\{ 0, g_k(x) \}^2}.
$$
Observe that in this case the penalty function $G_{\sigma}(x) = f(x) + \sigma d(y_0, \Phi(x))$ is exact if and only if
the standard $\ell_1$ penalty function
$$
  H_{\nu}(x) = f(x) + \nu \Big( \sum_{i = 1}^m |h_i(x)| + \sum_{k = 1}^l \max\{ 0, g_k(x) \} \Big)
$$
is exact. Furthermore, with the use of the well-known inequalities between the~Euclidean norm and the $\ell_1$ norm one
can easily show that the ex.p.p. $\sigma^*$ and $\nu^*$ of these functions satisfy the following
inequalities
$$
  \frac{1}{\sqrt{m + l}} \sigma^* \le \nu^* \le \sigma^*,
$$
and the same inequalities hold true for the local exact penalty parameters of these penalty functions. 

As a result, one obtains that in the case of equality and inequality constraints,
Theorems~\ref{Thrm_LocalExactEquiv_Simple} and \ref{Thrm_GlobalExactEquiv_Simple} describe a direct relation between the
exactness of the standard $\ell_1$ penalty function for a mathematical programming problem, and the exactness of 
the penalty function $F_{\lambda}(x, \varepsilon)$ for the same problem. Moreover, these theorems allow one to obtain
estimates of the (local or global) ex.p.p. of the penalty function $F_{\lambda}(x, \varepsilon)$ via the (local or
global) ex.p.p. of the $\ell_1$ penalty function.

Note that the results above correspond to the case $w = 0$ in (\ref{Def_SmoothExactPenFunc}). Let us show that the same
results can be obtained in the general case. We extend only Theorem~\ref{Thrm_GlobalExactEquiv_Simple} to a more
general case. Theorem~\ref{Thrm_LocalExactEquiv_Simple} can be extended in a similar way.

\begin{theorem} \label{Thrm_EquivExactness_NonSimple}
Let $Y$ be a normed space, $y_0 = 0$, and let for some $w \in Y$ one has
$$
  F_{\lambda}(x, \varepsilon) = 
  f(x) + \varepsilon^{-1} d(0, \Phi(x) - \varepsilon w)^2 + \lambda \varepsilon 
  \quad \forall \varepsilon > 0,
$$
where $d(0, \Phi(x) - \varepsilon w) = + \infty$ if $\Phi(x) = \emptyset$. Then the penalty function 
$F_{\lambda}(x, \varepsilon)$ is globally exact if and only if the penalty function 
$G_{\sigma}(x) = f(x) + \sigma d(0, \Phi(x))$ is globally exact, and
$$
  \frac{(\sigma^*)^2}{4} - \| w \| \sigma^* \le \lambda^* 
  \le \left( \frac{\sigma^*}{2} + \| w \| \right)^2,
$$
where $\sigma^*$ is the ex.p.p. of $G_{\sigma}$.
\end{theorem}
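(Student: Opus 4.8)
The plan is to reduce the analysis, exactly as in the proof of Theorem~\ref{Thrm_GlobalExactEquiv_Simple}, to the inner minimization of $F_{\lambda}(x, \cdot)$ over $\varepsilon > 0$ for fixed $x$, and to show that this inner value is sandwiched between two copies of the nonsmooth penalty function $G_{\sigma}$ with slightly different parameters. The only new feature compared with the case $w = 0$ is that the constraint violation term $d(0, \Phi(x) - \varepsilon w)$ now depends on $\varepsilon$ not merely through the factor $\varepsilon^{-1}$. The first step is therefore to control this dependence: writing $\rho(x) = d(0, \Phi(x))$ and $c = \|w\|$, the triangle inequality applied to $\|z - \varepsilon w\|$ for $z \in \Phi(x)$, followed by passage to the infimum over $z \in \Phi(x)$, yields the perturbation estimate
\[
  \max\{ 0, \rho(x) - c \varepsilon \} \le d(0, \Phi(x) - \varepsilon w) \le \rho(x) + c \varepsilon
  \qquad \forall \varepsilon \ge 0 .
\]

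Next I would substitute these two bounds into $F_{\lambda}(x, \varepsilon) = f(x) + \varepsilon^{-1} d(0, \Phi(x) - \varepsilon w)^2 + \lambda \varepsilon$ and minimize the resulting majorant and minorant over $\varepsilon > 0$. For the upper bound one replaces the distance by $\rho(x) + c\varepsilon$, expands the square, and minimizes $f(x) + \rho(x)^2 \varepsilon^{-1} + (\lambda + c^2)\varepsilon + 2 c \rho(x)$, whose minimum over $\varepsilon > 0$ is attained at $\varepsilon = \rho(x)/\sqrt{\lambda + c^2}$ and equals $G_{\sigma_2}(x)$ with $\sigma_2 = 2\sqrt{\lambda + c^2} + 2c$. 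For the lower bound the truncation $\max\{ 0, \cdot \}$ must be dealt with: on $(0, \rho(x)/c\,]$ one argues identically and obtains the value $G_{\sigma_1}(x)$ with $\sigma_1 = 2\sqrt{\lambda + c^2} - 2c$ (the unconstrained minimizer $\rho(x)/\sqrt{\lambda + c^2}$ indeed lies in this interval precisely because $\lambda \ge 0$), whereas on $(\rho(x)/c, +\infty)$ the minorant collapses to $f(x) + \lambda\varepsilon > f(x) + (\lambda/c)\rho(x)$, and the elementary inequality $\lambda/c \ge \sigma_1$ (equivalent to $\lambda^2 \ge 0$) shows that this region cannot lower the infimum. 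I expect this case analysis around $\varepsilon = \rho(x)/c$, together with the verification that the quadratic minimizer stays admissible, to be the only genuinely delicate point. Altogether this gives, for every $x \in A$,
\[
  G_{\sigma_1}(x) \le \inf_{\varepsilon \ge 0} F_{\lambda}(x, \varepsilon) \le G_{\sigma_2}(x),
  \qquad \sigma_1 = 2\sqrt{\lambda + \|w\|^2} - 2\|w\|, \quad \sigma_2 = 2\sqrt{\lambda + \|w\|^2} + 2\|w\|,
\]
where feasible points ($\rho(x) = 0$) and points with $\Phi(x) = \emptyset$ are checked separately and make all three quantities coincide or equal $+\infty$.

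Finally I would pass to the infimum over $x \in A$ and invoke Remark~\ref{Rmrk_ExactPenCond}. Writing $f^* = \inf_{x \in \Omega} f(x)$ and using that $\inf_x G_{\sigma}$ is nondecreasing in $\sigma$ and never exceeds $f^*$, the sandwich gives $\inf_x G_{\sigma_1} \le \inf_{x, \varepsilon} F_{\lambda} \le \inf_x G_{\sigma_2}$. If $G_{\sigma}$ is globally exact and $\lambda > (\sigma^*)^2/4 + \|w\|\sigma^*$, then $\sigma_1 > \sigma^*$, hence $\inf_x G_{\sigma_1} = f^*$, which forces $\inf_{x,\varepsilon} F_{\lambda} = f^*$; combined with the attainment of the minimum of $f$ on $\Omega$ this makes $F_{\lambda}$ globally exact and yields $\lambda^* \le (\sigma^*)^2/4 + \|w\|\sigma^* \le (\sigma^*/2 + \|w\|)^2$. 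Conversely, if $F_{\lambda}$ is globally exact and $\lambda > \lambda^*$, then $\inf_x G_{\sigma_2} = f^*$, so $\sigma_2$ is an admissible parameter for $G_{\sigma}$ and $\sigma^* \le \sigma_2 = 2\sqrt{\lambda + \|w\|^2} + 2\|w\|$; letting $\lambda \downarrow \lambda^*$ and squaring, while isolating the harmless case $\sigma^* \le 2\|w\|$ in which the claimed lower bound is nonpositive, gives $\lambda^* \ge (\sigma^*)^2/4 - \|w\|\sigma^*$. This establishes both the equivalence of exactness and the two-sided estimate.
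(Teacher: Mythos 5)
Your proof is correct, and its skeleton is the same as the paper's: estimate $d(0,\Phi(x)-\varepsilon w)$ from above and below by $d(0,\Phi(x))\pm\varepsilon\|w\|$, minimize explicitly in $\varepsilon$ so as to trap $\inf_{\varepsilon\ge 0}F_{\lambda}(x,\varepsilon)$ between two copies of $G_{\sigma}$, and conclude via Remark~\ref{Rmrk_ExactPenCond}. The direction in which $F_{\lambda}$ is assumed exact is essentially identical to the paper's (same upper estimate, same constant $2(\sqrt{\lambda^*+\|w\|^2}+\|w\|)$, same lower bound on $\lambda^*$). The genuine difference is in the converse direction. After the same reverse triangle inequality, the paper discards the term $\varepsilon^2\|w\|^2$ in order to obtain the single inequality $d(0,\Phi(x)-\varepsilon w)^2\ge d(0,\Phi(x))^2-2\varepsilon\|w\|\,d(0,\Phi(x))$, valid for all $\varepsilon\ge 0$; minimization then gives the coefficient $2(\sqrt{\lambda}-\|w\|)$ and hence $\lambda^*\le(\sigma^*/2+\|w\|)^2$. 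You instead keep the full square $(d(0,\Phi(x))-\varepsilon\|w\|)^2$ on the region $\varepsilon\le d(0,\Phi(x))/\|w\|$, where the truncation is inactive, and check that the complementary region is harmless because $\lambda/\|w\|\ge 2(\sqrt{\lambda+\|w\|^2}-\|w\|)$; this yields the larger coefficient $\sigma_1=2(\sqrt{\lambda+\|w\|^2}-\|w\|)$ and therefore the sharper conclusion $\lambda^*\le(\sigma^*)^2/4+\|w\|\sigma^*$, which implies the stated bound (it differs from it by $\|w\|^2$). This refinement buys something real: the sharper estimate is exactly the one that the paper's closing remark of Section~\ref{LinearCase} claims only under the extra assumption that $\Phi$ is single-valued or of the form $\Phi(x)=[g(x),+\infty)$ (cf.\ Remark~\ref{Rmrk_HowToReduceEXPP}), whereas your two-region argument proves it for an arbitrary set-valued $\Phi$, and the paper's first example shows it is attained; what the paper's cruder unified inequality buys in exchange is only a marginally shorter computation. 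Two small points to make explicit in a polished write-up: the case split presupposes $\|w\|>0$ (for $w=0$ the statement reduces to Theorem~\ref{Thrm_GlobalExactEquiv_Simple}), and when squaring $\sigma^*-2\|w\|\le 2\sqrt{\lambda^*+\|w\|^2}$ one must, as you do, set aside the trivial case $\sigma^*\le 2\|w\|$ in which the claimed lower bound is nonpositive.
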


\begin{proof}
Suppose that $F_{\lambda}(x, \varepsilon)$ is exact. Then taking into account Remark~\ref{Rmrk_ExactPenCond} one gets
that
\begin{equation} \label{EquivDefOfExactness}
  \inf_{\varepsilon \ge 0} F_{\lambda}(x, \varepsilon) \ge \inf_{x \in \Omega} f(x) =: f^*
  \quad \forall x \in A \quad \forall \lambda \ge \lambda^*.
\end{equation}
Let us find an upper estimate of $\inf\{ F_{\lambda}(x, \varepsilon) \mid \varepsilon > 0 \}$. If $x \in \Omega$, then
$F_{\lambda}(x, \varepsilon) \le f(x) + (\lambda + \| w \|^2) \varepsilon$, which yields that the infimum is equal to
$f(x)$. If $x \in A \setminus \Omega$, then for any $\varepsilon > 0$ one has
$$
  F_{\lambda}(x, \varepsilon) = f(x) + \frac{1}{\varepsilon} d(0, \Phi(x) - \varepsilon w)^2 + \lambda \varepsilon.
$$
It is easy to see that $d(0, \Phi(x) - \varepsilon w) \le d(0, \Phi(x)) + \varepsilon \| w \|$. Hence one has
$$
  d(0, \Phi(x) - \varepsilon w)^2 \le d(0, \Phi(x))^2 + 2 \varepsilon \| w \| d(0, \Phi(x)) + \varepsilon^2 \| w \|^2,
$$
which implies that for any $x \in A$ and $\varepsilon > 0$ one has
$$
  F_{\lambda}(x, \varepsilon) \le f(x) + \frac{1}{\varepsilon} d(0, \Phi(x))^2 + 2 \| w \| d(0, \Phi(x)) + 
  (\lambda + \| w \|^2) \varepsilon.
$$
Minimizing the right-hand side of the latter inequality with respect to $\varepsilon$ one obtains that
$$
  \inf_{\varepsilon \ge 0} F_{\lambda}(x, \varepsilon) \le 
  f(x) + 2 (\sqrt{\lambda + \| w \|^2} + \| w \|) d(0, \Phi(x)) \quad \forall x \in A.
$$
Consequently, with the use of (\ref{EquivDefOfExactness}) one gets that
$$
  G_{\sigma}(x) = f(x) + \sigma d(0, \Phi(x)) \ge f^* \quad \forall x \in A \quad
  \forall \sigma \ge 2 (\sqrt{\lambda^* + \| w \|^2} + \| w \|).
$$
Hence the penalty function $G_{\sigma}$ is globally exact, and
$$
  \sigma^* \le 2(\sqrt{\lambda^* + \| w \|^2} + \| w \|) \iff
  \lambda^* \ge \frac{(\sigma^*)^2}{4} - \| w \| \sigma^*
$$
by virtue of Remark~\ref{Rmrk_ExactPenCond} (note that the expression on the right-hand side is negative, when
$\sigma^* < 4 \| w \|$). 

Suppose, now, that the penalty function $G_{\sigma}$ is globally exact. Then
\begin{equation} \label{StandPenFuncExactDef}
  G_{\sigma}(x) = f(x) + \sigma d(0, \Phi(x)) \ge f^* \quad \forall x \in A \quad \forall \sigma \ge \sigma^*.
\end{equation}
Let us find a lower estimate of $\inf\{ F_{\lambda}(x, \varepsilon) \mid \varepsilon \in \mathbb{R}_+ \}$ for any 
$x \in A \setminus \Omega$ (if $x \in \Omega$, then the infimum is equal to $f(x)$). 

Applying the well-known inequality $| \| w \| - \| v \| | \le \| w - v \|$ one obtains that
$$
  d(0, \Phi(x) - \varepsilon w) \ge d(0, \Phi(x)) - \varepsilon \| w \|.
$$
Hence one has that if $d(0, \Phi(x)) - \varepsilon \| w \| \ge 0$, then
$$
  d(0, \Phi(x) - \varepsilon w)^2 \ge d(0, \Phi(x))^2 - 2 \varepsilon \| w \| d(0, \Phi(x)) + 
  \varepsilon^2 \| w \|^2,
$$
while if $d(0, \Phi(x)) - \varepsilon \| w \| < 0$, then
$$
  d(0, \Phi(x) - \varepsilon w)^2 \ge 0 \ge d(0, \Phi(x)) \big( d(0, \Phi(x)) - 2 \varepsilon \| w \| \big).
$$
Consequently, one gets that
$$
  d(0, \Phi(x) - \varepsilon w)^2 \ge d(0, \Phi(x))^2 - 2 \varepsilon \| w \| d(0, \Phi(x)) \quad 
  \forall \varepsilon \in \mathbb{R}_+.
$$
Therefore for any $x \in A$ and $\varepsilon \ge 0$ one has
$$
  F_{\lambda}(x, \varepsilon) \ge f(x) + \frac{1}{\varepsilon} d(0, \Phi(x))^2 - 2 \| w \| d(0, \Phi(x)) +
  \lambda \varepsilon.
$$
Minimizing the right-hand side of the last inequality with respect to $\varepsilon$ one gets that
$$
  \inf_{\varepsilon \ge 0} F_{\lambda}(x, \varepsilon) \ge 
  f(x) + 2 \left( \sqrt{\lambda} - \| w \| \right) d(0, \Phi(x)).
$$
Hence applying (\ref{StandPenFuncExactDef}) one obtains that
$$
  \inf_{\varepsilon \ge 0} F_{\lambda}(x, \varepsilon) \ge f^* \quad \forall x \in A
$$
for any $\lambda \ge 0$ such that $2 (\sqrt{\lambda} - \| w \| ) \ge \sigma^*$. Thus, taking into account
Remark~\ref{Rmrk_ExactPenCond} one gets that the penalty function $F_{\lambda}(x, \varepsilon)$ is exact and
$$
  \lambda^* \le \left( \frac{\sigma^*}{2} + \| w \| \right)^2,
$$
that completes the proof.	 
\end{proof}

\begin{remark} \label{Rmrk_HowToReduceEXPP}
From the theorems above it follows that the ex.p.p. of the penalty function $F_{\lambda}(x, \varepsilon)$
asymptotically behaves like the square of the ex.p.p. of the standard nonsmooth exact penalty function 
$G_{\sigma}(x) = f(x) + \sigma d(y_0, \Phi(x))$. Thus, in the general case, the ex.p.p. of the penalty function
$F_{\lambda}(x, \varepsilon)$ is significantly larger, then the ex.p.p. of the standard exact penalty function. However,
one can easily modify this penalty function to reduce its ex.p.p. Namely, for some $\alpha > 0$ define the smooth
penalty function as follows
$$
  F_{\lambda}(x, \varepsilon) = f(x) + \frac{\lambda^{\alpha}}{\varepsilon} d(0, \Phi(x) - \varepsilon w)^2 + 
  \lambda \varepsilon \quad \forall \varepsilon > 0
$$
(cf. the penalty function in \cite{LianZhang}). It is easy to see that the ex.p.p. of this penalty function is
decreasing in $\alpha$. In particular, arguing in the same way as in the proof of
Theorem~\ref{Thrm_LocalExactEquiv_Simple} one can show that in the case $\alpha = 1$ and $w = 0$ one has
$\lambda^* = \sigma^* / 2$. Under the assumption that the mapping $\Phi$ is single valued, one can show that in the case
$\alpha = 1$ and $w \ne 0$ the following estimates hold true
$$
  \frac{\sigma^*}{2(\sqrt{1 + \|w\|^2} + \| w \|)} \le \lambda^* \le
  \frac{\sigma^*}{2(\sqrt{1 + \|w\|^2} - \| w \|)}
$$
(the assumption that $\Phi$ is single-valued allows one to use the more accurate lower estimate
$$
  d(0, \Phi(x) - \varepsilon w)^2 = \| \Phi(x) - \varepsilon w \|^2 \ge (\| \Phi(x) \| - \varepsilon \| w \|)^2
$$
in the proof of Theorem~\ref{Thrm_EquivExactness_NonSimple}).
\end{remark}

Theorem~\ref{Thrm_EquivExactness_NonSimple} provides estimates of the ex.p.p. of the penalty function $F_{\lambda}$
with arbitrary $w \in Y$. Let us show that a choice of $w$ can both increase and decrease the ex.p.p., and
that the lower estimate in Theorem~\ref{Thrm_EquivExactness_NonSimple} is sharp. 

\begin{example}
Let $X = A = \mathbb{R}^n$, $Y = \mathbb{R}$, $y_0 = 0$ and 
$$
  f(x) = x_1 + \ldots + x_n, \quad \Phi(x) = \big[ \| x \|^2 - 1, + \infty \big),
$$
where $\| \cdot \|$ is the Euclidean norm. Thus, the problem ($\mathcal{P}$) takes the form
\begin{equation} \label{example1}
  \min x_1 + \ldots + x_n \quad \text{subject to} \quad x_1^2 + \ldots + x_n^2 \le 1.
\end{equation}
It is easy to verify that that a unique point of global minimum of this problem has the form
$$
  x^* = \left( - \frac{1}{\sqrt{n}}, \ldots, - \frac{1}{\sqrt{n}} \right).
$$
Observe that the standard penalty function $G_{\sigma}(x) = f(x) + \sigma \max\{ 0, \| x \|^2 - 1 \}$ for problem
(\ref{example1}) is convex. Therefore $x^*$ is a point global minimum of $G_{\sigma}$ iff 
$0 \in \partial G_{\sigma}(x^*)$, where $\partial G_{\sigma}(x^*)$ is the subdifferential of $G_{\sigma}$ at $x^*$ in
the sense of convex analysis. From the fact that
$$
  \partial G_{\sigma} (x^*) = (1, \ldots, 1) + \co\{ 0, 2 \sigma x^* \},
$$
it follows that $0 \in \partial G_{\sigma}(x^*)$ iff $\sigma \ge \sqrt{n} / 2$. Hence the penalty function $G_{\sigma}$
is globally exact, and $\sigma^* = \sqrt{n} / 2$. Consequently, by Theorem~\ref{Thrm_EquivExactness_NonSimple} 
the penalty function $F_{\lambda}$ with arbitrary $w \in \mathbb{R}$ is also exact. Moreover, in the case $w = 0$ one
has $\lambda^* = n / 16$ by virtue of Theorem~\ref{Thrm_GlobalExactEquiv_Simple}. Let us compute the ex.p.p. of the
penalty function $F_{\lambda}$ in the case of arbitrary $w \in \mathbb{R}$.

Choose arbitrary $w < 0$. Then for any $\varepsilon > 0$ and $x \notin \Omega$ (i.e. $\| x \| > 1$) one has
$$
  F_{\lambda}(x, \varepsilon) = \sum_{i = 1}^n x_i + \frac{1}{\varepsilon} \max\{ 0, \| x \|^2 - 1 - \varepsilon w \}^2
  + \lambda \varepsilon.
$$
Note that $\| x \|^2 - 1 - \varepsilon w > 0$ for any $x \notin \Omega$ and $\varepsilon > 0$ due to the fact that 
$w < 0$. Hence for any such $x$ and $\varepsilon$ one has
$$
  F_{\lambda}(x, \varepsilon) = \sum_{i = 1}^n x_i + \frac{1}{\varepsilon} ( \| x \|^2 - 1 - \varepsilon w )^2 + 
  \lambda \varepsilon.
$$
Minimizing the right-hand side with respect to $\varepsilon > 0$ one obtains
$$
  \min_{\varepsilon > 0} F_{\lambda}(x, \varepsilon) = 
  \sum_{i = 1}^n x_i + 2 (\sqrt{\lambda + w^2} - w) ( \| x \|^2 - 1 ) = G_{\sigma}(x),
$$
where $\sigma = 2 (\sqrt{\lambda + w^2} - w)$. Consequently, one has that
$$
  \min_{\varepsilon > 0} F_{\lambda}(x, \varepsilon) = G_{\sigma}(x) \ge f^* = f(x^*) \quad \forall x \notin \Omega
$$
if and only if $\sigma \ge \sigma^* = \sqrt{n} / 2$. Therefore taking into account Remark~\ref{Rmrk_ExactPenCond} one
obtains that $\lambda^*$ is equal to the greatest lower bound of all $\lambda \ge 0$ for which 
$2 (\sqrt{\lambda + w^2} - w) \ge \sqrt{n} / 2$. Hence
$$
  \lambda^* = \begin{cases}
    0, & \text{if } |w| \ge \sqrt{n} / 8, \\
    n / 16 - |w| \sqrt{n} / 2, & \text{otherwise},
  \end{cases}
$$
or, equivalently, $\lambda^* = \max\{ 0, (\sigma^*)^2 / 4 - |w| \sigma^* \}$. Thus, the lower estimate in
Theorem~\ref{Thrm_EquivExactness_NonSimple} is sharp. Note also that in the case $w < 0$ the ex.p.p. of $F_{\lambda}$
is smaller than in the case $w = 0$.

Let, now, $w > 0$ be arbitrary. Fix $x \in \mathbb{R}^n$ such that $\| x \| > 1$, and denote
$E = \{ \varepsilon > 0 \mid \| x \|^2 - 1 - \varepsilon w > 0 \}$. Clearly, $E = (0, (\| x \|^2 - 1) / w)$. 
For any $\varepsilon \in E$ one has
$$
  F_{\lambda}(x, \varepsilon) = \sum_{i = 1}^n x_i + \frac{1}{\varepsilon} ( \| x \|^2 - 1 - \varepsilon w )^2 + 
  \lambda \varepsilon.
$$
Introduce the function 
$$
  h(\varepsilon) = \frac{1}{\varepsilon} ( \| x \|^2 - 1 - \varepsilon w )^2 + \lambda \varepsilon.
$$
Let us find a global minimum of the function $h$ on the set $E$. Solving the equation $h'(\varepsilon^*) = 0$ one gets
$\varepsilon^* = (\| x \|^2 - 1) / \sqrt{\lambda + w^2}$. It is easy to check that $\varepsilon^* \in E$, 
$h'(\varepsilon) < 0$ for any $\varepsilon \in (0, \varepsilon^*)$ and $h'(\varepsilon) > 0$ for any 
$\varepsilon > \varepsilon^*$. Therefore $\varepsilon^*$ is a point of global minimum of the function $h$ on the set
$E$. Hence
$$
  \min_{\varepsilon \in E} F_{\lambda}(x, \varepsilon) = \sum_{i = 1}^n x_i + h(\varepsilon^*) = 
  \sum_{i = 1}^n x_i + 2 (\sqrt{\lambda + w^2} - w) ( \| x \|^2 - 1 ) = G_{\sigma}(x),
$$
where $\sigma = 2(\sqrt{\lambda + w^2} - w)$. On the other hand, if $\varepsilon \notin E$, i.e. if 
$\varepsilon \ge (\| x \|^2 - 1) / w$, then
$$
  F_{\lambda}(x, \varepsilon) = \sum_{i = 1}^n x_i + \lambda \varepsilon \ge
  \sum_{i = 1}^n x_i + \frac{\lambda}{w} (\| x \|^2 - 1) = G_{\lambda / w}(x).
$$
Therefore
$$
  \min_{\varepsilon > 0} F_{\lambda}(x, \varepsilon) = G_{\gamma}(x) \quad \forall x \notin \Omega,
$$
where $\gamma = \min\{ \lambda / w, 2 (\sqrt{\lambda + w^2} - w) \}$. Then arguing in the same way as in the
case $w < 0$ one obtains that $\lambda^*$ coincides with the greatest lower bound of all $\lambda \ge 0$ for which
$\gamma \ge \sigma^*$, which yields
$$
  \lambda^* = \frac{(\sigma^*)^2}{4} + w \sigma^* = \frac{n}{16} + w \frac{\sqrt{n}}{2}.
$$
Note that in the case $w > 0$ the ex.p.p. of $F_{\lambda}$ is greater than in the case $w = 0$.
\end{example}

\begin{remark}
If in Theorem~\ref{Thrm_EquivExactness_NonSimple} the mapping $\Phi$ is single-valued or it has the form 
$\Phi(x) = [g(x), + \infty)$ for some function $g \colon X \to \mathbb{R}$, then one can obtain the more accurate upper
estimate $\lambda^* \le (\sigma^*)^2 / 4 + \| w \| \sigma^*$ (cf.~Remark~\ref{Rmrk_HowToReduceEXPP}). Furthermore, as
the example above shows, this upper estimate is sharp.
\end{remark}

\section{Nonlinear Trasformations of Smooth Penalty Functions}
\label{NonlinearCase}

In this section, we study how the introduction of nonlinear functions $\phi$ and $\beta$ into the definition of 
the smooth penalty function (see~(\ref{Def_SmoothExactPenFunc})) affects its exactness.

\subsection{The Case $w = 0$}

Let $\phi \colon [0, + \infty] \to [0, +\infty]$ be a nondecreasing function such that $\phi(t) = 0$ iff $t = 0$
(the element $+\infty$ is included into the domain of $\phi$ in order to allow $\Phi(x)$ to be empty for some $x$).
Introduce the following penalty function
$$
  F_{\lambda}[\phi](x, \varepsilon) = 
  f(x) + \varepsilon^{-1} \phi(d(y_0, \Phi(x))^2) + \lambda \varepsilon
  \quad \forall \varepsilon > 0.
$$
If $\phi(t) \equiv t$, then we simply write $F_{\lambda}(x, \varepsilon)$. In order to underline the effect of the
function $\phi$, denote the ex.p.p. of this penalty function at a point  $x^* \in \Omega$ by 
$\lambda^*(x^*, \phi)$. We will also use the similar notation for the global ex.p.p.

At first, note that Theorems~\ref{Thrm_LocalExactEquiv_Simple} and \ref{Thrm_GlobalExactEquiv_Simple} can be easily
generalized to the case of the penalty function above. In particular, the following result holds true.

\begin{theorem} \label{Thrm_GlobalExactEquiv_Nonlinear}
The penalty function $F_{\lambda}[\phi](x, \varepsilon)$ is globally exact if and only if the penalty function 
$G_{\sigma}[\phi](x) = f(x) + \sigma \sqrt{\phi( d(y_0, \Phi(x))^2 )}$ is globally exact, and 
$\lambda^*(\phi) = \sigma^*(\phi)^2 / 4$, where $\sigma^*(\phi)$ is the ex.p.p. of $G_{\sigma}[\phi]$.
\end{theorem}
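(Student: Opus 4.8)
The plan is to follow the proof of Theorem~\ref{Thrm_GlobalExactEquiv_Simple} almost verbatim, the only change being that the constant $d(y_0, \Phi(x))^2$ appearing there is replaced by $\phi(d(y_0, \Phi(x))^2)$. The crucial observation is that for a \emph{fixed} $x \in A$ the quantity $P(x) := \phi(d(y_0, \Phi(x))^2)$ is simply a nonnegative (possibly $+\infty$) constant, so the partial minimization in $\varepsilon$ is exactly the same one-dimensional computation as in the linear case. Concretely, I would first establish the pointwise identity
\begin{equation*}
  \inf_{\varepsilon \ge 0} F_{\lambda}[\phi](x, \varepsilon) = G_{2\sqrt{\lambda}}[\phi](x)
  \qquad \forall x \in A, \quad \forall \lambda \ge 0,
\end{equation*}
and then read off the equivalence of global exactness together with the relation between the exact penalty parameters through Remark~\ref{Rmrk_ExactPenCond}.

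To prove the identity I would split on feasibility. If $x \in \Omega$, then $d(y_0, \Phi(x)) = 0$, and since $\phi(t) = 0$ iff $t = 0$ we get $P(x) = 0$; hence $F_{\lambda}[\phi](x, \varepsilon) = f(x) + \lambda\varepsilon$, whose infimum over $\varepsilon \ge 0$ equals $f(x) = G_{2\sqrt{\lambda}}[\phi](x)$. If $x \in A \setminus \Omega$, then $d(y_0, \Phi(x)) > 0$ and therefore $P(x) > 0$; for $\varepsilon > 0$ one has $F_{\lambda}[\phi](x, \varepsilon) = f(x) + P(x)/\varepsilon + \lambda\varepsilon$, and minimizing $P(x)/\varepsilon + \lambda\varepsilon$ over $\varepsilon > 0$ (as in the proof of Theorem~\ref{Thrm_LocalExactEquiv_Simple}, where the minimizer is $\overline{\varepsilon} = \sqrt{P(x)/\lambda}$) yields the value $2\sqrt{\lambda P(x)} = 2\sqrt{\lambda}\sqrt{\phi(d(y_0, \Phi(x))^2)}$, i.e.\ precisely $G_{2\sqrt{\lambda}}[\phi](x)$. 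The degenerate situations are harmless: when $\Phi(x) = \emptyset$ one has $d(y_0, \Phi(x)) = +\infty$ and $P(x) = +\infty$, so both sides of the identity are $+\infty$; and when $\lambda = 0$ and $P(x) > 0$ the infimum is the (unattained) limit $f(x)$ as $\varepsilon \to +\infty$, again matching $G_0[\phi](x) = f(x)$. Note that only the property $\phi(t) = 0 \iff t = 0$, and not the monotonicity of $\phi$, is used here, since $\phi$ enters the $\varepsilon$-minimization solely through the constant $P(x)$.

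With the identity in hand the conclusion is immediate. Taking the infimum over $x \in A$ gives $\inf_{x \in A,\, \varepsilon \ge 0} F_{\lambda}[\phi] = \inf_{x \in A} G_{2\sqrt{\lambda}}[\phi]$, and since the condition that $f$ attain a global minimum on $\Omega$ is common to both functions, Remark~\ref{Rmrk_ExactPenCond} (and its obvious counterpart for the standard penalty function $G_{\sigma}[\phi]$) shows that $F_{\lambda}[\phi]$ satisfies the exactness condition for a given $\lambda$ iff $G_{\sigma}[\phi]$ does for $\sigma = 2\sqrt{\lambda}$. Because $\lambda \mapsto \sigma = 2\sqrt{\lambda}$ is a continuous increasing bijection of $[0, +\infty)$ onto itself, the set of exact parameters for $F_{\lambda}[\phi]$ is the preimage of the set of exact parameters for $G_{\sigma}[\phi]$; taking greatest lower bounds and using $\lambda = \sigma^2/4$ yields $\lambda^*(\phi) = \sigma^*(\phi)^2/4$. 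I do not expect any genuine obstacle: the entire content is the reduction to the one-variable minimization, and the only points requiring a little care are the bookkeeping for $\Phi(x) = \emptyset$ (where $P(x) = +\infty$) and the verification that the greatest-lower-bound relation survives the substitution $\sigma = 2\sqrt{\lambda}$.
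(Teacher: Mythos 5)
Your proof is correct and follows exactly the route the paper intends: the paper states this theorem without proof, as an ``easy generalization'' of Theorems~\ref{Thrm_LocalExactEquiv_Simple} and \ref{Thrm_GlobalExactEquiv_Simple}, whose proofs consist of precisely your computation --- minimizing $\varepsilon \mapsto P(x)/\varepsilon + \lambda\varepsilon$ for fixed $x$ to obtain $\inf_{\varepsilon \ge 0} F_{\lambda}[\phi](x,\varepsilon) = G_{2\sqrt{\lambda}}[\phi](x)$, and then invoking Remark~\ref{Rmrk_ExactPenCond}. The only (harmless) quibble is that for $\Phi(x) = \emptyset$ you assert $P(x) = +\infty$, whereas the paper only guarantees $\phi(+\infty) \ge \phi(t)$ for all $t$ (so $\phi(+\infty)$ could be finite); your identity holds in that case anyway, since the one-dimensional minimization only uses the value of the constant $P(x)$.
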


\begin{remark}
Let the function $\phi$ be twice continuously differentiable on $[0, t_0]$ for some $t_0 > 0$. From the theorem above it
follows that for the penalty function $F_{\lambda}[\phi]$ to be exact in the general case it is necessary that
$\phi'(0) > 0$. Indeed, let $X$ be a normed space, $A = X$, and let the functions $f$ and $d(y_0, \Phi(\cdot))^2$ be
G\^ateaux differentiable at a globally optimal solution $x^*$ of the problem ($\mathcal{P}$). Arguing by reductio ad
absrudum, suppose that $\phi'(0) = 0$ (note that since $\phi$ is nondecreasing, then $\phi'(0) \ge 0$), but the penalty
function $F_{\lambda}[\phi]$ is globally exact. Then by Theorem~\ref{Thrm_GlobalExactEquiv_Nonlinear} the point $x^*$ is
a point of global minimum of the penalty function $G_{\sigma}$. 

Let us show that the function $G_{\sigma}$ is G\^ateaux differentiable at $x^*$. For any $h \in X$ and 
$\alpha \in \mathbb{R}$ denote $\omega(\alpha) = d(y_0, \Phi(x^* + \alpha h))^2$. From the fact that the function 
$d(y_0, \Phi(\cdot))^2$ is G\^ateaux differentiable at $x^*$, and $x^*$ is a point global minimum of this function
(recall that $d(y_0, \Phi(\cdot))^2$ is nonnegative, and $d(y_0, \Phi(x^*))^2 = 0$) it follows that $\omega$ is
differentiable at $0$, and $\omega'(0) = 0$. Applying the Taylor expansion for the function $\phi$ at $0$ one obtains
that for any sufficiently small $\alpha > 0$ there exists $\tau \in [0, \omega(\alpha)]$ such that
$$
  \frac{1}{\alpha} \sqrt{\phi( d(y_0, \Phi(x^* + \alpha h))^2 )} = 
  \frac{1}{\alpha} \sqrt{\phi\big( \omega(\alpha) \big)} =
  \sqrt{\frac{1}{\alpha^2} \frac{\phi''(\tau)}{2} \omega(\alpha)^2 }.
$$
Passing to the limit as $\alpha \to +0$ one gets that
$$
  \frac{d}{d \alpha} \sqrt{\phi( d(y_0, \Phi(x^* + \alpha h))^2 )} =  \sqrt{\frac{\phi''(0)}{2} \omega'(0)^2} = 0.
$$
Hence the function $\sqrt{\phi( d(y_0, \Phi(\cdot))^2 )}$ is G\^ateaux differentiable at $x^*$, and its G\^ateaux
derivative is equal to $0$. Therefore the function $G_{\sigma}$ is also G\^ateaux differentiable at $x^*$, and
$G'_{\sigma}(x^*) = f'(x^*)$, which implies $f'(x^*) = 0$ due to the fact that $x^*$ is a point of global minimum of
$G_{\sigma}$. However, in the general case the equality $f'(x^*) = 0$ does not hold true, since $x^*$ is a point of
global minimum of the \textit{constrained} optimization problem ($\mathcal{P}$). Thus, in the general case, for the
penalty function $F_{\lambda}[\phi]$ to be exact it is necessary that $\phi'(0) > 0$. In particular, for any 
$\theta > 0$ the penalty function
$$
  F_{\lambda}(x, \varepsilon) = f(x) + \frac{1}{\varepsilon} d(y_0, \Phi(x))^{2 + \theta} + \lambda \varepsilon
$$
is not exact, provided there exists a point of global minimum $x^*$ of the problem ($\mathcal{P}$) such that 
$f'(x^*) \ne 0$.
\end{remark}

Let us study how the exactness of the penalty function $F_{\lambda}[\phi]$ changes with respect to a change of the
function $\phi$. We start we the case of local exactness.

\begin{theorem} \label{Thrm_LocalExactTrasform_Nonlinear}
Let $\psi \colon [0, + \infty] \to [0, +\infty]$ be a nondecreasing function such that $\psi(t) = 0$ iff $t = 0$. Let
also $x^* \in \dom f$ be a locally optimal solution of the problem ($\mathcal{P}$). Suppose that the following
assumptions hold true:
\begin{enumerate}
\item{the penalty function $F_{\lambda}[\phi]$ is exact at $x^*$;}

\item{there exist $\psi_0 > 0$ and $t_0 > 0$ such that $\psi(t) \ge \psi_0 \phi(t)$ for all $t \in [0, t_0]$;
}

\item{the function $d(y_0, \Phi(\cdot))$ is continuous at $x^*$.
}
\end{enumerate}
Then the penalty function $F_{\lambda}[\psi]$ is also exact at $x^*$, and
$$
  \lambda^*(x^*, \psi) \le \frac{\lambda^*(x^*, \phi)}{\psi_0}
$$
\end{theorem}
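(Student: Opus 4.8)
The plan is to transfer exactness from $F_{\lambda}[\phi]$ to $F_{\lambda/\psi_0}[\psi]$ by combining the pointwise bound $\psi \ge \psi_0 \phi$ on $[0, t_0]$ (assumption~2) with a rescaling of the auxiliary variable $\varepsilon$. First I would fix an arbitrary $\lambda > \lambda^*(x^*, \phi)$. By the exactness of $F_{\lambda}[\phi]$ at $x^*$ (assumption~1) there exist a neighbourhood $U$ of $x^*$ and $\varepsilon_0 > 0$ such that $F_{\lambda}[\phi](x, \tau) \ge f(x^*)$ for every $x \in U \cap A$ and every $\tau \in (0, \varepsilon_0]$; by the convention on $F_{\lambda}(\cdot, 0)$ together with the local optimality of $x^*$, the same bound holds for $\tau = 0$.

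The key step is a pointwise inequality. For $x \in A$ with $d(y_0, \Phi(x))^2 \le t_0$ and any $\varepsilon > 0$, assumption~2 applied to $t = d(y_0, \Phi(x))^2$ yields
$$
  F_{\lambda/\psi_0}[\psi](x, \varepsilon) = f(x) + \varepsilon^{-1} \psi(d(y_0, \Phi(x))^2) + \frac{\lambda}{\psi_0} \varepsilon \ge f(x) + \psi_0 \varepsilon^{-1} \phi(d(y_0, \Phi(x))^2) + \frac{\lambda}{\psi_0} \varepsilon.
$$
Now I would set $\tau = \varepsilon / \psi_0$; substituting $\varepsilon = \psi_0 \tau$ collapses the right-hand side to exactly $f(x) + \tau^{-1} \phi(d(y_0, \Phi(x))^2) + \lambda \tau = F_{\lambda}[\phi](x, \tau)$. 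Thus, on the region where $d(y_0, \Phi(x))^2 \le t_0$, one has $F_{\lambda/\psi_0}[\psi](x, \varepsilon) \ge F_{\lambda}[\phi](x, \varepsilon/\psi_0)$ for all $\varepsilon > 0$, while at $\varepsilon = 0$ the two functions agree by the convention on $F_{\lambda}(\cdot, 0)$ (both $\phi$ and $\psi$ vanish only at $0$).

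Finally I would localize. Since $x^*$ is feasible, $d(y_0, \Phi(x^*)) = 0$, so by the continuity of $d(y_0, \Phi(\cdot))$ at $x^*$ (assumption~3) there is a neighbourhood $V \subseteq U$ of $x^*$ on which $d(y_0, \Phi(x))^2 \le t_0$. For $x \in V \cap A$ and $\varepsilon \in (0, \psi_0 \varepsilon_0]$ the rescaled parameter $\tau = \varepsilon/\psi_0$ lies in $(0, \varepsilon_0]$, whence the pointwise inequality together with the exactness bound gives $F_{\lambda/\psi_0}[\psi](x, \varepsilon) \ge F_{\lambda}[\phi](x, \varepsilon/\psi_0) \ge f(x^*) = F_{\lambda/\psi_0}[\psi](x^*, 0)$; the same bound holds at $\varepsilon = 0$. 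Hence $(x^*, 0)$ is a point of local minimum of $F_{\lambda/\psi_0}[\psi]$, so $F_{\lambda/\psi_0}[\psi]$ is exact at $x^*$ with $\lambda^*(x^*, \psi) \le \lambda/\psi_0$; letting $\lambda \downarrow \lambda^*(x^*, \phi)$ yields the asserted estimate. The argument is routine once the rescaling $\tau = \varepsilon/\psi_0$ is spotted; the only point demanding care is the simultaneous localization — keeping $x$ inside the exactness neighbourhood $U$ of $F_{\lambda}[\phi]$, keeping $d(y_0, \Phi(x))^2$ within $[0, t_0]$ so that assumption~2 is applicable (this is where continuity is used), and keeping the rescaled parameter within the admissible range $(0, \varepsilon_0]$.
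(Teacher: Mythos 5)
Your proof is correct and takes essentially the same route as the paper's: the same pointwise comparison $F_{\lambda/\psi_0}[\psi](x,\varepsilon) \ge F_{\lambda}[\phi](x,\varepsilon/\psi_0)$ obtained from $\psi \ge \psi_0\phi$ together with the rescaling $\tau = \varepsilon/\psi_0$, localized via the continuity of $d(y_0,\Phi(\cdot))$ and with $\varepsilon = 0$ handled by the convention on $F_{\lambda}(\cdot,0)$. The only differences are cosmetic (the order in which the two neighbourhoods are chosen, and your slightly more careful bookkeeping of the admissible range $(0,\psi_0\varepsilon_0]$ for the rescaled parameter).
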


\begin{proof}
The mapping $d(y_0, \Phi(\cdot))$ is continuous at $x^*$ and $d(y_0, \Phi(x^*)) = 0$ by the fact that $x^*$ is
feasible. Therefore there exists a neighbourhood $U$ of $x^*$ such that
\begin{equation} \label{ContinNearLocalMin_NonlinCase}
  d(y_0, \Phi(x))^2 \le t_0 \quad \forall x \in U.
\end{equation}
Taking into account the fact that $F_{\lambda}[\phi]$ is exact at $x^*$ one obtains that for any 
$\lambda > \lambda^*(x^*, \phi)$ there exist a neighbourhood $V \subset U$ of $x^*$ and $\varepsilon_0 > 0$ for which
$$
  F_{\lambda}[\phi](x, \varepsilon) \ge F_{\lambda}[\phi](x^*, 0) = f(x^*) 
  \quad \forall (x, \varepsilon) \in V \times [0, \varepsilon_0].
$$
Consider now the penalty function $F_{\lambda}[\psi]$. If $\varepsilon = 0$, then
$$
  F_{\lambda}[\psi](x, 0) = F_{\lambda}[\phi](x, 0) \ge f(x^*) = F_{\lambda}[\psi](x^*, 0) 
  \quad \forall x \in V.
$$
On the other hand, if $\varepsilon \in (0, \varepsilon_0]$, then applying the inequality 
$\psi(t) \ge \psi_0 \phi(t)$, and taking into account (\ref{ContinNearLocalMin_NonlinCase}) one gets that
\begin{multline*}
  f(x^*) \le F_{\lambda}[\phi](x, \varepsilon) = 
  f(x) + \frac{1}{\varepsilon} \phi( d(y_0, \Phi(x))^2 ) + \lambda \varepsilon \le \\
  \le f(x) + \frac{1}{\varepsilon \psi_0} \psi( d(y_0, \Phi(x))^2 ) + \lambda \varepsilon =
  F_{\lambda / \psi_0}[\psi](x, \varepsilon \psi_0).
\end{multline*}
for any $x \in V$. Therefore for any $\lambda > \lambda^*(x^*, \phi)$ one has
$$
  F_{\lambda / \psi_0}[\psi](x, \varepsilon) \ge f(x^*) \quad 
  \forall (x, \varepsilon) \in V \times [0, \varepsilon_0 / \psi_0],  
$$
which implies that the penalty function $F_{\lambda}[\psi]$ is exact at $x^*$ and 
$\lambda^*(x^*, \psi) \le \lambda^*(x^*, \phi) / \psi_0$.	 
\end{proof}

\begin{corollary} \label{Crlr_LocExPP_Nonlin_Nonshifted}
Let $x^* \in \dom f$ be a locally optimal solution of the problem ($\mathcal{P}$), and the mapping $d(y_0, \Phi(\cdot))$
be continuous at $x^*$. Suppose that there exists the right-hand side derivative $\phi'_+(0)$ of $\phi$ at $0$ such that
$\phi'_+(0) > 0$. Then the penalty function $F_{\lambda}[\phi]$ is exact at $x^*$ if and only if the penalty function
$F_{\lambda}$ is exact at this point and
$$
  \lambda^*(x^*, \phi) = \frac{\lambda^*(x^*)}{\phi'_+(0)}.
$$
\end{corollary}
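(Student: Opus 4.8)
The plan is to apply Theorem~\ref{Thrm_LocalExactTrasform_Nonlinear} twice, once in each direction, using the identity function $\mathrm{id}(t) = t$ (for which $F_{\lambda}[\mathrm{id}] = F_{\lambda}$) as the comparison function, and then to squeeze the two resulting inequalities for the exact penalty parameters together into the claimed equality.

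First I would convert the existence of $\phi'_+(0) > 0$ into a two-sided linear bound near the origin. Since $\phi(0) = 0$, one has $\phi'_+(0) = \lim_{t \to +0} \phi(t) / t$, so for every $\delta \in (0, \phi'_+(0))$ there is $t_0 > 0$ with
$$
  (\phi'_+(0) - \delta)\, t \le \phi(t) \le (\phi'_+(0) + \delta)\, t \qquad \forall t \in [0, t_0].
$$
Both inequalities are exactly what is needed to verify assumption~2 of Theorem~\ref{Thrm_LocalExactTrasform_Nonlinear}, once the roles of the two functions are chosen appropriately; note also that $\mathrm{id}$ is nondecreasing and vanishes only at $0$, and the same holds for $\phi$ by the standing assumptions, so $\mathrm{id}$ and $\phi$ are both admissible in the theorem.

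For the forward direction, assume $F_{\lambda}[\phi]$ is exact at $x^*$. The upper bound rewrites as $\mathrm{id}(t) = t \ge \bigl(\phi'_+(0) + \delta\bigr)^{-1} \phi(t)$ on $[0, t_0]$, so I would apply Theorem~\ref{Thrm_LocalExactTrasform_Nonlinear} with base function $\phi$ and new function $\psi = \mathrm{id}$, taking $\psi_0 = 1/(\phi'_+(0) + \delta)$; assumptions~1 and~3 hold by hypothesis and the continuity of $d(y_0, \Phi(\cdot))$ at $x^*$. This yields that $F_{\lambda} = F_{\lambda}[\mathrm{id}]$ is exact at $x^*$ with $\lambda^*(x^*) \le (\phi'_+(0) + \delta)\, \lambda^*(x^*, \phi)$, and letting $\delta \to +0$ gives $\lambda^*(x^*) \le \phi'_+(0)\, \lambda^*(x^*, \phi)$. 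For the backward direction, assume $F_{\lambda}$ is exact at $x^*$ and reverse the roles, taking base function $\mathrm{id}$ and new function $\psi = \phi$; the lower bound gives $\phi(t) \ge (\phi'_+(0) - \delta)\, \mathrm{id}(t)$ on $[0, t_0]$, so $\psi_0 = \phi'_+(0) - \delta > 0$ and the theorem yields that $F_{\lambda}[\phi]$ is exact at $x^*$ with $\lambda^*(x^*, \phi) \le \lambda^*(x^*) / (\phi'_+(0) - \delta)$; letting $\delta \to +0$ gives $\lambda^*(x^*, \phi) \le \lambda^*(x^*) / \phi'_+(0)$.

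Combining the two bounds closes the argument: the forward estimate rearranges to $\lambda^*(x^*, \phi) \ge \lambda^*(x^*)/\phi'_+(0)$, and together with the backward estimate this forces the equality $\lambda^*(x^*, \phi) = \lambda^*(x^*)/\phi'_+(0)$. The main obstacle is not any single computation but the two limit passages $\delta \to +0$: one must confirm that the existence (not merely positivity) of $\phi'_+(0)$ genuinely delivers the \emph{two-sided} linear sandwich above, so that each direction of the equivalence can be fed to Theorem~\ref{Thrm_LocalExactTrasform_Nonlinear} with a legitimate constant $\psi_0 > 0$, and that the neighbourhood and $\varepsilon_0$ produced by the theorem do not degenerate as $\delta$ shrinks — which they do not, since the conclusion of the theorem concerns only the value of the parameter and the mere existence of exactness.
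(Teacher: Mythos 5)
Your proposal is correct and is essentially the paper's intended argument: the corollary is stated without proof precisely because it follows from Theorem~\ref{Thrm_LocalExactTrasform_Nonlinear} applied twice (once with $\psi = \mathrm{id}$ and base $\phi$, once with the roles swapped), using the two-sided linear sandwich $(\phi'_+(0) - \delta)t \le \phi(t) \le (\phi'_+(0) + \delta)t$ near $0$ that the existence of $\phi'_+(0) > 0$ provides, and then letting $\delta \to +0$ in the two resulting bounds on the exact penalty parameters. Your observation that the limit passage is harmless because the conclusion involves only the fixed numbers $\lambda^*(x^*)$ and $\lambda^*(x^*, \phi)$, not the neighbourhoods produced for each $\delta$, is exactly the right justification.
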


The previous corollary can be partly generalized to the case of global exactness.

\begin{theorem} \label{Theorem_Convexity}
Let the penalty function $F_{\lambda}$ be exact. Suppose also that $\phi$ is convex, and there exists the right-hand
side derivative $\phi'_+(0)$ of $\phi$ at $0$ such that $\phi'_+(0) > 0$. Then the penalty function $F_{\lambda}[\phi]$
is exact and
$$
  \lambda^*(\phi) \le \frac{\lambda^*}{\phi'_+(0)}.
$$
\end{theorem}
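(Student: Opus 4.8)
The plan is to deduce the global exactness of $F_{\lambda}[\phi]$ from the exactness of $F_{\lambda}$ by means of a single pointwise comparison inequality, exactly as in the proof of Theorem~\ref{Thrm_LocalExactTrasform_Nonlinear}, but now using convexity to make that inequality valid \emph{globally} rather than only on a neighbourhood $[0, t_0]$ of the origin. The decisive observation is that a convex function $\phi$ with $\phi(0) = 0$ lies above its supporting line at the origin, so that
$$
  \phi(t) \ge \phi'_+(0)\, t \qquad \forall t \ge 0.
$$
This is precisely the inequality needed to run the argument of Theorem~\ref{Thrm_LocalExactTrasform_Nonlinear} with the base function $\phi$ there replaced by the identity and the comparison function $\psi$ there replaced by the present $\phi$ (taking $\psi_0 = \phi'_+(0)$); the only---but essential---difference is that convexity upgrades this bound from local to global, which is exactly what the passage from local to global exactness requires.

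First I would verify the displayed inequality. Since $\phi$ is convex and $\phi(0) = 0$, the difference quotient $t \mapsto \phi(t)/t$ is nondecreasing on $(0, +\infty)$ and tends to $\phi'_+(0)$ as $t \to +0$; hence $\phi(t)/t \ge \phi'_+(0)$ for every $t > 0$, which is the claim (the case $\phi(t) = +\infty$ being trivial). Writing $\psi_0 := \phi'_+(0) > 0$ and using this bound together with the substitution $\eta = \varepsilon / \psi_0$, I would then show that for every $x \in A$ and $\varepsilon > 0$
$$
  F_{\lambda}[\phi](x, \varepsilon) = f(x) + \frac{1}{\varepsilon} \phi\big( d(y_0, \Phi(x))^2 \big) + \lambda \varepsilon
  \ge f(x) + \frac{\psi_0}{\varepsilon} d(y_0, \Phi(x))^2 + \lambda \varepsilon
  = F_{\lambda \psi_0}\Big( x, \frac{\varepsilon}{\psi_0} \Big).
$$
As $\varepsilon$ runs over $(0, +\infty)$ so does $\eta = \varepsilon / \psi_0$, and the values at $\varepsilon = 0$ coincide, so taking the infimum over $\varepsilon \ge 0$ and then over $x \in A$ yields
$$
  \inf_{x \in A,\ \varepsilon \ge 0} F_{\lambda}[\phi](x, \varepsilon)
  \ge \inf_{x \in A,\ \eta \ge 0} F_{\lambda \psi_0}(x, \eta).
$$

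Finally, I would invoke Remark~\ref{Rmrk_ExactPenCond}. Since $F_{\lambda}$ is exact with ex.p.p.\ $\lambda^*$, for every $\lambda$ with $\lambda \psi_0 > \lambda^*$ the right-hand side above equals $f^* := \inf_{x \in \Omega} f(x)$; here one uses that the exactness condition \eqref{ExactPenCond} for $F$, once satisfied, persists for all larger penalty parameters, because $F_{\lambda'}(x, \varepsilon) \ge F_{\lambda''}(x, \varepsilon)$ whenever $\lambda' \ge \lambda''$ and $\varepsilon \ge 0$, while the infimum is always bounded above by $f^*$, attained at feasible points with $\varepsilon = 0$. The reverse inequality $\inf F_{\lambda}[\phi] \le f^*$ is immediate from the same feasible points, so \eqref{ExactPenCond} holds for $F_{\lambda}[\phi]$ whenever $\lambda > \lambda^* / \psi_0$; moreover $f$ attains its global minimum on $\Omega$ because $F_{\lambda}$ is exact. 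By Remark~\ref{Rmrk_ExactPenCond} this means $F_{\lambda}[\phi]$ is globally exact and $\lambda^*(\phi) \le \lambda$ for every $\lambda > \lambda^* / \phi'_+(0)$, whence $\lambda^*(\phi) \le \lambda^* / \phi'_+(0)$. The main obstacle---and the reason convexity cannot simply be dropped---is guaranteeing the comparison inequality $\phi(t) \ge \phi'_+(0)\, t$ over the whole range of admissible constraint-violation values rather than merely near the origin: away from a feasible point $d(y_0, \Phi(x))^2$ can be arbitrarily large, so a purely local bound (as in Corollary~\ref{Crlr_LocExPP_Nonlin_Nonshifted}) no longer suffices, and it is precisely convexity that supplies the required global supporting-line estimate.
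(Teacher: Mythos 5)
Your proposal is correct and follows essentially the same route as the paper: the global supporting-line inequality $\phi(t) \ge \phi'_+(0)\, t$ (which the paper obtains by citing Rockafellar, Theorem~23.1) yields the pointwise comparison $F_{\lambda}[\phi](x, \varepsilon) \ge F_{\lambda \phi'_+(0)}\bigl( x, \varepsilon / \phi'_+(0) \bigr)$, from which exactness and the bound $\lambda^*(\phi) \le \lambda^* / \phi'_+(0)$ follow via Remark~\ref{Rmrk_ExactPenCond}. You merely spell out the infimum-passing and monotonicity-in-$\lambda$ details that the paper compresses into ``which implies the desired result.''
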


\begin{proof}
Since the function $\phi$ is convex, then $\phi(t) \ge \phi'_+(0) t$ for all $t \ge 0$ 
(\cite{Rockafellar}, Theorem 23.1). Therefore one has
$$
  F_{\lambda}[\phi](x, \varepsilon) \ge F_{\lambda \phi'_+(0)}\big( x, \varepsilon / \phi'_+(0) \big) \quad
  \forall (x, \varepsilon) \in X \times \mathbb{R}_+,
$$
which implies the desired result.	 
\end{proof}

The theorem above provides only an upper estimate of the ex.p.p. of the penalty function $F_{\lambda}[\phi]$.
Furthermore, this estimate relies primarily on the information about the behaviour of a function $\phi$ in a
neighbourhood of zero (namely, it depends only on $\phi'_+(0)$), while a possible effect of the nonlinearity of the
function $\phi$ for large values of the constraint violation measure $d(y_0, \Phi(x))^2$ is not taken into account
explicitly. Let us show that on one hand, the estimate of the ex.p.p. in Theorem~\ref{Theorem_Convexity} is sharp,
but on the other hand this estimate is very crude, since even in the case $\phi'_+(0) = 1$ the ex.p.p. $\lambda^*(\phi)$
can be significantly smaller than the ex.p.p. $\lambda^*$.

\begin{example}
Let $X = A = Y = \mathbb{R}$, $y_0 = 0$ and $\Phi(x) = [ x, + \infty )$. For any $c \ge 0$ define
$$
  f(x) = \begin{cases}
    -x, & \text{if } x \le 1, \\
    - 0.5 x^2 - 0.5, & \text{if } x \in (1, c + 1), \\
    - (c + 1) x + 0.5 c^2 + c, & \text{if } x \ge c + 1.
  \end{cases}
$$
It is easy to verify that the function $f$ is continuously differentiable.

Since the inclusion $0 \in \Phi(x)$ is equivalent to the inequality $x \le 0$, then the problem ($\mathcal{P}$)
is equivalent to the problem of minimizing the function $f$ over the set $(- \infty, 0]$. Clearly, a unique globally
optimal solution of this problem is the point $x^* = 0$ and $f^* = 0$. 

The standard penalty function for the problem ($\mathcal{P}$) has the form 
$G_{\sigma}(x) = f(x) + \sigma \max\{ 0, x \}$. Note that
$$
  G_{\sigma}(x) = - (c + 1) x + 0.5 c^2 + c + \sigma x \ge f^* = 0 \quad \forall x \ge c + 1
$$
if and only if $\sigma \ge c + 1$. Moreover, since $G'_{\sigma}(x) = -x + \sigma$ for any $x \in (1, c + 1)$, then
$G_{\sigma}(x) \ge f^*$ for any $x \in (1, c + 1)$ and $\sigma \ge c + 1$. Therefore
$G_{\sigma}(x) \ge f^*$ for all $x \in \mathbb{R}$ if and only if $\sigma \ge c + 1$. Hence the penalty function
$G_{\sigma}$ is globally exact and $\sigma^* = c + 1$. Consequently, the smooth penalty function $F_{\lambda}$ 
(with $w = 0$) is also exact, and $\lambda^* = (c + 1)^2 / 4$ by virtue of Theorem~\ref{Thrm_GlobalExactEquiv_Simple}.

Define
$$
  \phi(t) = \begin{cases}
    \dfrac{t}{1 - t}, & \text{if } t \in [0, 1), \\
    + \infty, & \text{if } t \ge 1,
  \end{cases}
$$
and introduce the penalty function $G_{\sigma}[\phi](x) = f(x) + \sqrt{\phi( d(0, \Phi(x))^2 )}$. Observe that
$\phi(t) \ge t$ for all $t \in [0, 1)$ and $d(0, \Phi(x))^2 = \max\{ 0, x \}^2$. Therefore
$$
  G_{\sigma}[\phi](x) = - x + \sqrt{ \phi\big( \max\{ 0, x \}^2 \big) } \ge - x + \sigma \max\{ 0, x \} 
  \quad \forall	x \in [0, 1),
$$
and $G_{\sigma}[\phi](x) = + \infty$ for any $x \ge 1$. Hence for any $\sigma \ge 1$ one has 
$G_{\sigma}[\phi](x) \ge 0 = f^*$ for all $x \in \mathbb{R}$, which implies that the penalty function
$G_{\sigma}[\phi]$ is exact and $\sigma^*(\phi) \le 1$.

Let us show that $\sigma^*(\phi) = 1$. Indeed, let $\sigma \in (0, 1)$. Then there exists $\varepsilon > 0$ such that
$\sigma + \varepsilon < 1$. Hence for $x = \sqrt{1 - (\sigma + \varepsilon)^2}$ one has
$$
  G_{\sigma}[\phi](x) = - x + \sigma \sqrt{\phi(x^2)} = - x + \sigma \frac{x}{\sigma + \varepsilon} < 0 = f^*.
$$
Therefore $x^* = 0$ is not a point of global minimum of the penalty function $G_{\sigma}[\phi]$ for any 
$\sigma \in (0, 1)$, which yields $\sigma^*(\phi) = 1$. Applying Theorem~\ref{Thrm_GlobalExactEquiv_Nonlinear} one
gets that the penalty function $F_{\lambda}[\phi]$ is globally exact and $\lambda^*(\phi) = 1 / 4$.

Since $\phi'_+(0) = 1$, then with the use of Theorem~\ref{Theorem_Convexity} one obtains the estimate 
$\lambda^*(\phi) \le \lambda^* = (c + 1)^2 / 4$, that turns into an equality in the case $c = 0$. Thus, this estimate is
sharp. However, note also that $\lambda^* \to + \infty$ as $c \to \infty$, while $\lambda^*(\phi) = 1 / 4$ for all $c$.
\end{example}

The proof of Theorem~\ref{Theorem_Convexity} essentially relies on the convexity of the function $\phi$. Let us
show that a more sophisticated argument allows one to avoid this assumption. However, it should be underlined that this
result does not contain any estimates of the exact penalty parameter.

We need the following auxiliary result.

\begin{lemma} \label{LemmaExPenFunc}
The penalty function $G_{\sigma}(x) = f(x) + \sigma \phi(d(y_0, \Phi(x))^2)$ is globally exact if and only if the
function $G_{\sigma_1}$ is bounded below on $A$ for some $\sigma_1 > 0$, and there exists $\delta > 0$ such that
\begin{equation} \label{ExactOnOmegaDelta}
  G_{\sigma_2}(x) \ge f^* \quad \forall x \in \Omega_{\delta} = 
  \big\{ z \in A \mid \phi(d(y_0, \Phi(z))^2) < \delta \big\},
\end{equation}
for some $\sigma_2 > 0$, where $f^* = \inf_{x \in \Omega} f(x)$.
\end{lemma}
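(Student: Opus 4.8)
The plan is to argue entirely at the level of the infimum characterisation of exactness in Remark~\ref{Rmrk_ExactPenCond}, transcribed to the standard penalty function: $G_{\sigma}$ is globally exact exactly when $f$ attains its minimum on $\Omega$ and there is a $\sigma$ with $\inf_{x \in A} G_{\sigma}(x) = f^*$. Because $G_{\sigma}(x) = f(x)$ at every feasible $x$, the inequality $\inf_{x \in A} G_{\sigma}(x) \le f^*$ is automatic, so the whole question is when the reverse bound $G_{\sigma}(x) \ge f^*$ can be forced on all of $A$ by enlarging $\sigma$. I would prove the two implications separately: the forward one is a formality, and the backward one carries the argument.

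For necessity, assume $G_{\sigma}$ is globally exact, so there is $\sigma_0$ with $G_{\sigma}(x) \ge f^*$ on $A$ for all $\sigma \ge \sigma_0$. Any $\sigma_1 > \max\{\sigma_0, 0\}$ then makes $G_{\sigma_1}$ bounded below on $A$ by $f^* > -\infty$, and restricting the same bound to $\Omega_{\delta} \subseteq A$ yields \eqref{ExactOnOmegaDelta} with $\sigma_2 = \sigma_1$ and any $\delta > 0$.

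The real content is sufficiency. Given $G_{\sigma_1}(x) \ge M$ on $A$ and $G_{\sigma_2}(x) \ge f^*$ on $\Omega_{\delta}$, I would split $A$ into the near-feasible part $\Omega_{\delta}$ and its complement. On $\Omega_{\delta}$ monotonicity in the parameter suffices: since $\phi(d(y_0, \Phi(x))^2) \ge 0$, every $\sigma \ge \sigma_2$ gives $G_{\sigma}(x) \ge G_{\sigma_2}(x) \ge f^*$. On $A \setminus \Omega_{\delta}$ one has $\phi(d(y_0, \Phi(x))^2) \ge \delta$ by definition, so writing $G_{\sigma} = G_{\sigma_1} + (\sigma - \sigma_1)\phi(d(y_0, \Phi(x))^2)$ produces the uniform estimate $G_{\sigma}(x) \ge M + (\sigma - \sigma_1)\delta$ for $\sigma \ge \sigma_1$ (the case $\phi = +\infty$ being trivial, as then $G_{\sigma}(x) = +\infty$). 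Since $G_{\sigma_1} = f$ on $\Omega$, the lower bound gives $f^* = \inf_{\Omega} f \ge M$, so $(f^* - M)/\delta \ge 0$ is finite and taking $\sigma \ge \sigma_1 + (f^* - M)/\delta$ forces $G_{\sigma}(x) \ge f^*$ on the complement too. Hence $G_{\sigma} \ge f^*$ on all of $A$ once $\sigma \ge \max\{\sigma_1, \sigma_2, \sigma_1 + (f^* - M)/\delta\}$, giving $\inf_{x \in A} G_{\sigma}(x) = f^*$.

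The main obstacle is the estimate on $A \setminus \Omega_{\delta}$: mere boundedness below is of no use without the uniform gap $\phi \ge \delta$ off $\Omega_{\delta}$, which is precisely what turns each extra unit of $\sigma$ into a guaranteed increment $\delta$ and lets a finite $\sigma$ overwhelm a possibly very negative $M$; defining $\Omega_{\delta}$ through $\phi$ itself, rather than through $d(y_0, \Phi(\cdot))$, is what makes this work when $\phi$ is nonlinear. A secondary subtlety is that the infimum equality alone is not full global exactness: Remark~\ref{Rmrk_ExactPenCond} also demands that $f$ attain its minimum on $\Omega$, which does not follow from the two conditions and must be read off as part of the exactness notion (supplied by the standing hypotheses on $f$ and $\Omega$); once it is available, the infimum equality upgrades to global exactness via Remark~\ref{Rmrk_ExactPenCond}.
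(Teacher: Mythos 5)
Your proof is correct and takes essentially the same route as the paper's: the same trivial necessity argument, and for sufficiency the same split of $A$ into $\Omega_{\delta}$ and its complement, the same decomposition $G_{\sigma} = G_{\sigma_1} + (\sigma - \sigma_1)\phi(d(y_0, \Phi(\cdot))^2)$ combined with the uniform gap $\phi \ge \delta$ off $\Omega_{\delta}$, the same threshold $\sigma_1 + (f^* - c)/\delta$ (your $M$ is the paper's $c$), and the final appeal to Remark~\ref{Rmrk_ExactPenCond}. Your closing caveat about attainment of the minimum of $f$ on $\Omega$ is treated no more carefully in the paper itself (which simply says ``it remains to apply Remark~\ref{Rmrk_ExactPenCond}''), though your parenthetical is slightly off: the standing hypotheses give only boundedness below of $f$ on $\Omega$, not attainment, so attainment is genuinely an implicit ingredient of the exactness notion in both your argument and the paper's.
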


\begin{proof}
If the penalty function $G_{\sigma}$ is globally exact, then, obviously, for any $\sigma > \sigma^*$ the function
$G_{\sigma}$ is bounded below on $A$, and the condition (\ref{ExactOnOmegaDelta}) is valid for any $\delta > 0$.
Let us prove the converse statement. For any $x \in A \setminus \Omega_{\delta}$ one has
\begin{multline*}
  G_{\sigma}(x) = f(x) + \sigma \phi(d(y_0, \Phi(x))^2) = 
  G_{\sigma_1}(x) + (\sigma - \sigma_1) \phi(d(y_0, \Phi(x))^2) \ge \\
  \ge c + (\sigma - \sigma_1) \delta \ge f^*  \quad \forall \sigma \ge \widehat{\sigma}
\end{multline*}
where
$$
  c = \inf_{x \in A} G_{\sigma_1}(x) > - \infty, \quad 
  \widehat{\sigma} = \sigma_1 + \frac{f^* - c}{\delta}.
$$
Therefore $G_{\sigma}(x) \ge f^*$ for all $x \in A$ and $\sigma \ge \max\{ \sigma_2, \widehat{\sigma} \}$. It remains
to apply Remark~\ref{Rmrk_ExactPenCond}.	 
\end{proof}

\begin{theorem}
Let $\psi \colon [0, + \infty] \to [0, +\infty]$ be a nondecreasing function such that $\psi(t) = 0$ iff $t = 0$.
Suppose that following assumptions hold true:
\begin{enumerate}
\item{the penalty function $F_{\lambda}[\phi]$ is globally exact;}

\item{there exist $\psi_0 > 0$ and $t_0 > 0$ such that $\psi(t) \ge \psi_0 \phi(t)$ for all $t \in [0, t_0]$;
}

\item{the function $H_{\sigma}(x) = f(x) + \sigma \sqrt{\psi(d(y_0, \Phi(x))^2)}$ is bounded below on $A$ 
for some $\sigma \ge 0$.
}
\end{enumerate}
Then the penalty function $F_{\lambda}[\psi]$ is globally exact.
\end{theorem}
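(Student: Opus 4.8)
The plan is to push everything through to the \emph{nonsmooth} penalty functions via Theorem~\ref{Thrm_GlobalExactEquiv_Nonlinear} and then verify the sublevel-set criterion of Lemma~\ref{LemmaExPenFunc}. By Theorem~\ref{Thrm_GlobalExactEquiv_Nonlinear} applied to $\psi$, the function $F_{\lambda}[\psi]$ is globally exact if and only if $G_{\sigma}[\psi](x) = f(x) + \sigma \sqrt{\psi(d(y_0, \Phi(x))^2)}$ is globally exact; but this is exactly the function $H_{\sigma}$ appearing in assumption~3. Hence it suffices to prove that $H_{\sigma} = G_{\sigma}[\psi]$ is globally exact. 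At the same time, assumption~1 together with Theorem~\ref{Thrm_GlobalExactEquiv_Nonlinear} (now applied to $\phi$) gives that $G_{\sigma}[\phi](x) = f(x) + \sigma \sqrt{\phi(d(y_0, \Phi(x))^2)}$ is globally exact, so there is $\sigma^*(\phi) \ge 0$ such that $G_{\sigma}[\phi](x) \ge f^*$ for all $x \in A$ whenever $\sigma \ge \sigma^*(\phi)$.

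I would then apply Lemma~\ref{LemmaExPenFunc} with the nondecreasing function $\sqrt{\psi}$ (which vanishes only at $0$) in place of $\phi$; the lemma holds for any such generating function, since its proof uses nothing beyond monotonicity and the vanishing at $0$. In this application $\Omega_{\delta}$ becomes $\{ x \in A \mid \sqrt{\psi(d(y_0, \Phi(x))^2)} < \delta \}$. The boundedness-below requirement of the lemma follows at once from assumption~3: since the penalty term $\sigma' \sqrt{\psi(d(y_0, \Phi(x))^2)}$ is nonnegative, $H_{\sigma'}$ is bounded below for every $\sigma' \ge \sigma$, so taking $\sigma_1 = \max\{\sigma, 1\} > 0$ furnishes the needed index.

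The substance is the sublevel-set condition. First I choose $\delta > 0$ with $\delta^2 \le \psi(t_0)$; such $\delta$ exists because $t_0 > 0$ forces $\psi(t_0) > 0$. Monotonicity of $\psi$ then ensures that $\psi(d(y_0,\Phi(x))^2) < \delta^2 \le \psi(t_0)$ forces $d(y_0,\Phi(x))^2 \le t_0$, so on $\Omega_{\delta}$ the comparison $\psi(t) \ge \psi_0 \phi(t)$ of assumption~2 is available and yields $\sqrt{\psi(d(y_0,\Phi(x))^2)} \ge \sqrt{\psi_0}\,\sqrt{\phi(d(y_0,\Phi(x))^2)}$. Consequently, for any $\sigma_2 > 0$ and every $x \in \Omega_{\delta}$,
\[
  H_{\sigma_2}(x) = f(x) + \sigma_2 \sqrt{\psi(d(y_0,\Phi(x))^2)} \ge f(x) + \sigma_2 \sqrt{\psi_0}\,\sqrt{\phi(d(y_0,\Phi(x))^2)} = G_{\sigma_2 \sqrt{\psi_0}}[\phi](x).
\]
Choosing $\sigma_2 \ge \sigma^*(\phi)/\sqrt{\psi_0}$ makes the right-hand side at least $f^*$, which is precisely the sublevel-set inequality \eqref{ExactOnOmegaDelta}. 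Lemma~\ref{LemmaExPenFunc} then gives global exactness of $H_{\sigma} = G_{\sigma}[\psi]$, and Theorem~\ref{Thrm_GlobalExactEquiv_Nonlinear} converts this back into global exactness of $F_{\lambda}[\psi]$.

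The only delicate step is the calibration of $\delta$: assumption~2 controls $\psi$ relative to $\phi$ only on $[0, t_0]$, so I must guarantee that every point of the sublevel set $\Omega_{\delta}$ has constraint violation at most $t_0$, which is exactly what the monotonicity-based choice $\delta^2 \le \psi(t_0)$ secures. Everything else is a direct substitution, and the possible value $+\infty$ of $\psi$ (occurring when $\Phi(x) = \emptyset$) causes no trouble, since such points automatically fall outside $\Omega_{\delta}$ and are absorbed by the boundedness-below part of Lemma~\ref{LemmaExPenFunc}.
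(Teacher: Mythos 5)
Your proposal is correct and follows essentially the same route as the paper's own proof: reduce both smooth penalty functions to their nonsmooth counterparts $G_{\sigma}[\phi]$ and $H_{\sigma}=G_{\sigma}[\psi]$ via Theorem~\ref{Thrm_GlobalExactEquiv_Nonlinear}, verify the hypotheses of Lemma~\ref{LemmaExPenFunc} for $H_{\sigma}$ (boundedness below from assumption~3, and the sublevel-set inequality from $\psi(t) \ge \psi_0 \phi(t)$ on a set calibrated by monotonicity of $\psi$ so that the constraint violation stays in $[0, t_0]$), and convert back. If anything, your version is slightly tidier on two small points the paper glosses over: ensuring $\sigma_1 > 0$ when assumption~3 only gives some $\sigma \ge 0$, and choosing $\delta$ with $\delta^2 \le \psi(t_0)$ rather than $\delta = \sqrt{\psi(t_0)}$, which avoids trouble when $\psi(t_0) = +\infty$.
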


\begin{proof}
From the fact the penalty function $F_{\lambda}[\phi]$ is globally exact, and
Theorem~\ref{Thrm_GlobalExactEquiv_Nonlinear} it follows that the
penalty function $G_{\sigma}(x) = f(x) + \sigma \sqrt{\psi(d(y_0, \Phi(x))^2)}$ is also globally exact. Therefore there
exists $\sigma > 0$ such that
$$
  G_{\sigma}(x) \ge f^* := \inf_{x \in \Omega} f(x) \quad \forall x \in A.
$$
Denote $\delta = \sqrt{\psi(t_0)}$. Applying the inequality $\psi(t) \ge \psi_0 \phi(t)$, and taking into
account the fact that the function $\psi$ is nondecreasing one gets that
$$
  f^* \le G_{\sigma}(x) \le H_{\sigma / \sqrt{\psi_0}}(x) \quad
  \forall x \in \big\{ z \in A \mid \sqrt{\psi(d(y_0, \Phi(z))^2)} < \delta \big\}
$$
Hence and from Lemma~\ref{LemmaExPenFunc} it follows that the penalty function $H_{\sigma}$ is exact. Then applying
Theorem~\ref{Thrm_GlobalExactEquiv_Nonlinear} one obtains the required result.	 
\end{proof}

\subsection{The General Case}

Let, now, $Y$ be a normed space, $y_0 = 0$, and let $\phi \colon [0, + \infty] \to [0, +\infty]$ be a nondecreasing
function such that $\phi(t) = 0$ iff $t = 0$. For any $w \in Y$ define
\begin{equation} \label{NonlinearShiftedPenFunc}
  F_{\lambda}[\phi, w](x, \varepsilon) = 
    f(x) + \varepsilon^{-1} \phi(d(0, \Phi(x) - \varepsilon w)^2) + \lambda \varepsilon 
    \quad \forall \varepsilon > 0.
\end{equation}
If $\phi(t) \equiv t$, then we write $F_{\lambda}[w](x, \varepsilon)$. Denote the ex.p. p. of this function at
a point $x^* \in \Omega$ by $\lambda^*(x^*, \phi, w)$, and denote its global ex.p.p. by $\lambda^*(\phi, w)$.

Let us show that the results of the previous subsection cannot be directly generalized to the case of arbitrary 
$w \in Y$.

\begin{example}
Let $X = Y = A = \mathbb{R}$, $\Phi(x) = x$, $f(x) = - \sign(x) \sqrt{|x|}$, and $\phi(t) = \sqrt{t}$. Observe that
$\Omega = \{ 0 \}$ and
$$
  G_{\sigma}(x) = f(x) + \sigma \sqrt{ \phi( d(0, \Phi(x))^2 ) } = - \sign(x) \sqrt{|x|} + \sigma \sqrt{|x|}.
$$
Therefore the penalty function $G_{\sigma}(x)$ is exact, and $\sigma^*(\phi) = 1$. Hence by 
Theorem~\ref{Thrm_GlobalExactEquiv_Nonlinear} the penalty function $F_{\lambda}[\phi] = F_{\lambda}[\phi, 0]$ is
also exact, and $\lambda^*(\phi) = 1 / 4$.

Let now $w > 0$ be arbitrary. Then for any $\varepsilon > 0$ and $\lambda > 0$ one has
$$
  F_{\lambda}[\phi, w] ( \varepsilon w, \varepsilon ) = - \sign(\varepsilon w) \sqrt{|\varepsilon w|} + 
  \frac{1}{\varepsilon} | \varepsilon w - \varepsilon w | + \lambda \varepsilon = 
  - \sqrt{ \varepsilon w } + \lambda \varepsilon,
$$
which yields $F_{\lambda}[\phi, w] ( \varepsilon w, \varepsilon ) < 0 = f(0)$ for any sufficiently small 
$\varepsilon > 0$, and any $\lambda > 0$. Thus, the penalty function $F_{\lambda}[\phi, w]$ is not exact 
for any $w > 0$.
\end{example}

However, Theorems~\ref{Thrm_LocalExactTrasform_Nonlinear} and \ref{Theorem_Convexity} can be partly extended to the
general case.

\begin{theorem}
Let $\psi \colon [0, + \infty] \to [0, +\infty]$ be a nondecreasing function such that $\psi(t) = 0$ iff $t = 0$. Let
also $x^* \in \dom f$ be a locally optimal solution of the problem $(\mathcal{P})$. Suppose that following assumptions
hold true:
\begin{enumerate}
\item{there exist $\psi_0 > 0$ and $t_0 > 0$ such that $\psi(t) \ge \psi_0 \phi(t)$ for all $t \in [0, t_0]$;
}

\item{the penalty function $F_{\lambda}[\phi, \psi_0 w]$ is exact at $x^*$;}

\item{the function $(x, \varepsilon) \to d(0, \Phi(x) - \varepsilon w)$ is continuous at $(x^*, 0)$.
}
\end{enumerate}
Then the penalty function $F_{\lambda}[\psi, w]$ is also exact at $x^*$ and
$$
  \lambda^*(x^*, \psi, w) \le \frac{\lambda^*(x^*, \phi, \psi_0 w)}{\psi_0}
$$
\end{theorem}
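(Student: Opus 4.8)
The plan is to mirror the proof of Theorem~\ref{Thrm_LocalExactTrasform_Nonlinear} almost verbatim, the single new ingredient being that one must rescale the auxiliary parameter $\varepsilon$ and the shift vector $w$ \emph{simultaneously}. The guiding observation is that the shift $\psi_0 w$ appearing in assumption~2 is tuned precisely so that, under the change of variable $\widetilde\varepsilon = \psi_0 \varepsilon$, the constraint-violation arguments of $F_{\lambda}[\phi, \psi_0 w]$ and $F_{\lambda/\psi_0}[\psi, w]$ coincide: indeed $\widetilde\varepsilon w = \psi_0 \varepsilon w = \varepsilon(\psi_0 w)$, so that $d(0, \Phi(x) - \widetilde\varepsilon w) = d(0, \Phi(x) - \varepsilon \psi_0 w)$.

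First I would establish the pointwise domination. Fixing $\lambda > \lambda^*(x^*, \phi, \psi_0 w)$, for any $x$ and any $\varepsilon > 0$ with $d(0, \Phi(x) - \varepsilon \psi_0 w)^2 \le t_0$, the inequality $\psi(t) \ge \psi_0 \phi(t)$ of assumption~1 gives
$$
  F_{\lambda/\psi_0}[\psi, w](x, \psi_0 \varepsilon) = f(x) + \frac{1}{\psi_0 \varepsilon} \psi\big( d(0, \Phi(x) - \varepsilon \psi_0 w)^2 \big) + \lambda \varepsilon \ge F_{\lambda}[\phi, \psi_0 w](x, \varepsilon),
$$
exactly as in the case $w = 0$. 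This is the only place where the hypotheses relating $\psi$ and $\phi$ are used.

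Next I would set up the neighbourhoods. Since $x^*$ is feasible and $y_0 = 0$, one has $d(0, \Phi(x^*)) = 0$, so the map $(x, \varepsilon) \mapsto d(0, \Phi(x) - \varepsilon w)$ vanishes at $(x^*, 0)$; by the joint continuity of assumption~3 there exist a neighbourhood $U$ of $x^*$ and $\eta > 0$ with $d(0, \Phi(x) - \varepsilon w)^2 \le t_0$ for all $(x, \varepsilon) \in U \times [0, \eta]$. Here one must use joint continuity in $(x, \varepsilon)$, rather than the purely spatial continuity employed in Theorem~\ref{Thrm_LocalExactTrasform_Nonlinear}, because the violation now depends on $\varepsilon$ through the shift; this is the main conceptual point. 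The exactness of $F_{\lambda}[\phi, \psi_0 w]$ furnishes a further neighbourhood $V \subset U$ and $\varepsilon_0 > 0$ with $F_{\lambda}[\phi, \psi_0 w](x, \varepsilon) \ge f(x^*)$ on $V \times [0, \varepsilon_0]$. Putting $\widetilde\varepsilon_0 = \min\{\eta, \psi_0 \varepsilon_0\}$, then for $x \in V$ and $\widetilde\varepsilon \in (0, \widetilde\varepsilon_0]$ one has $d(0, \Phi(x) - \widetilde\varepsilon w)^2 \le t_0$ (since $\widetilde\varepsilon \le \eta$), so combining the two displays with $\varepsilon = \widetilde\varepsilon/\psi_0 \le \varepsilon_0$ yields $F_{\lambda/\psi_0}[\psi, w](x, \widetilde\varepsilon) \ge F_{\lambda}[\phi, \psi_0 w](x, \widetilde\varepsilon/\psi_0) \ge f(x^*)$. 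The boundary case $\widetilde\varepsilon = 0$ is immediate, since by convention both $F_{\lambda/\psi_0}[\psi, w](x, 0)$ and $F_{\lambda}[\phi, \psi_0 w](x, 0)$ equal $f(x)$ for feasible $x$ and $+\infty$ otherwise, so the bound transfers directly.

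Hence $F_{\lambda/\psi_0}[\psi, w]$ is exact at $x^*$, whence $\lambda^*(x^*, \psi, w) \le \lambda/\psi_0$; letting $\lambda \downarrow \lambda^*(x^*, \phi, \psi_0 w)$ gives the claimed estimate. I do not expect a serious obstacle beyond bookkeeping: the sole nontrivial step is recognising the combined $\varepsilon$-and-shift rescaling that aligns the two violation measures, together with the attendant use of joint rather than spatial continuity.
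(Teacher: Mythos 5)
Your proof is correct and follows essentially the same route as the paper: the key inequality $F_{\lambda/\psi_0}[\psi, w](x, \psi_0\varepsilon) \ge F_{\lambda}[\phi, \psi_0 w](x, \varepsilon)$ is exactly the paper's rescaling identity (written there as $F_{\lambda}[\psi, w](x, \varepsilon) \ge F_{\lambda\psi_0}[\phi, \psi_0 w](x, \varepsilon/\psi_0)$), and your neighbourhood argument with joint continuity in $(x,\varepsilon)$ is precisely the adaptation of Theorem~\ref{Thrm_LocalExactTrasform_Nonlinear} that the paper invokes by reference. You have merely written out the details the paper leaves implicit, including the correct handling of the boundary case $\varepsilon = 0$.
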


\begin{proof}
From the inequality $\psi(t) \ge \psi_0 \phi(t)$ it follows that for any $x \in A$ and $\varepsilon > 0$ such that
$d(0, \Phi(x) - \varepsilon w)^2 \le t_0$ one has
\begin{multline*}
  F_{\lambda}[\psi, w] (x, \varepsilon) \ge f(x) + 
  \frac{\psi_0}{\varepsilon} \phi( d(0, \Phi(x) - \varepsilon w)^2 ) + \lambda \psi_0 \frac{\varepsilon}{\psi_0} = \\
  = F_{\lambda \psi_0}[\phi, \psi_0 w] \left( x, \frac{\varepsilon}{\psi_0} \right).
\end{multline*}
Then applying the continuity of the mapping $(x, \varepsilon) \to d(0, \Phi(x) - \varepsilon w)$, and arguing in the
same way as in the proof of Theorem~\ref{Thrm_LocalExactTrasform_Nonlinear} one obtains the desired result.	 
\end{proof}

\begin{corollary}
Let $x^* \in \dom f$ be a locally optimal solution of the problem ($\mathcal{P}$), and the mapping 
$(x, \varepsilon) \to d(0, \Phi(x) - \varepsilon w)$ be continuous at $(x^*, 0)$. Suppose that there exists the
right-hand side derivative $\phi'_+(0)$ of $\phi$ at $0$ such that $\phi'_+(0) > 0$. Then the penalty function
$F_{\lambda}[\phi, w]$ is exact at $x^*$ if and only if the penalty function $F_{\lambda}[\phi'_+(0) w]$ is exact
at this point, and for any $0 < \phi_1 < \phi'_+(0) < \phi_2$ one has 
$\lambda^*(x^*, \phi_2 w) / \phi_2 \le \lambda^*(x^*, \phi, w) \le \lambda^*(x^*, \phi_1 w) / \phi_1$.
\end{corollary}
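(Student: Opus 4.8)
The plan is to derive the corollary from the theorem immediately preceding it, applied twice in dual roles, supplemented by the shift-independence of the exactness of the linear penalty function. First I would record two elementary preliminaries. Since $\phi$ is nondecreasing with $\phi(0) = 0$ and $\phi(t)/t \to \phi'_+(0)$ as $t \to +0$, for every $0 < \phi_1 < \phi'_+(0) < \phi_2$ there is $t_0 > 0$ with
$$
  \phi_1 t \le \phi(t) \le \phi_2 t \quad \forall t \in [0, t_0].
$$
Moreover, continuity of $(x, \varepsilon) \mapsto d(0, \Phi(x) - \varepsilon w)$ at $(x^*, 0)$ entails continuity of $(x, \varepsilon) \mapsto d(0, \Phi(x) - \varepsilon c w)$ at $(x^*, 0)$ for every $c > 0$, via the reparametrisation $\varepsilon \mapsto c \varepsilon$.

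For the upper estimate I would apply the preceding theorem with the dominating function $\psi := \phi$, the dominated function taken to be $t \mapsto t$, the constant $\psi_0 := \phi_1$, and shift $w$. Its hypotheses read: $\phi(t) \ge \phi_1 t$ on $[0, t_0]$ (the left inequality above); exactness of $F_{\lambda}[\phi_1 w]$ at $x^*$; and the given continuity. The conclusion is that $F_{\lambda}[\phi, w]$ is exact at $x^*$ and $\lambda^*(x^*, \phi, w) \le \lambda^*(x^*, \phi_1 w)/\phi_1$. For the lower estimate I would apply the same theorem in the dual way, with $\psi := (t \mapsto t)$, dominated function $\phi$, constant $\psi_0 := 1/\phi_2$, and shift $\phi_2 w$. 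Now assumption (1) is $t \ge \phi(t)/\phi_2$ on $[0, t_0]$ (the right inequality above), assumption (2) is exactness of $F_{\lambda}[\phi, (1/\phi_2)\phi_2 w] = F_{\lambda}[\phi, w]$, and (3) holds by the scaling remark. The conclusion reads: $F_{\lambda}[\phi_2 w]$ is exact at $x^*$ and $\lambda^*(x^*, \phi_2 w) \le \phi_2 \lambda^*(x^*, \phi, w)$, that is $\lambda^*(x^*, \phi_2 w)/\phi_2 \le \lambda^*(x^*, \phi, w)$.

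To assemble the equivalence I would invoke the local analogue of Theorem~\ref{Thrm_EquivExactness_NonSimple} (whose existence is noted after its statement, in parallel with Theorem~\ref{Thrm_LocalExactEquiv_Simple}): the exactness of $F_{\lambda}[c w]$ at $x^*$ is, for every $c > 0$, equivalent to the exactness of $G_{\sigma}(x) = f(x) + \sigma d(0, \Phi(x))$ at $x^*$, a condition not involving the shift. Hence $F_{\lambda}[\phi_1 w]$, $F_{\lambda}[\phi'_+(0) w]$ and $F_{\lambda}[\phi_2 w]$ are exact at $x^*$ simultaneously. Combining this with the two applications yields the equivalence: if $F_{\lambda}[\phi'_+(0) w]$ is exact, then $F_{\lambda}[\phi_1 w]$ is exact and the upper-estimate application makes $F_{\lambda}[\phi, w]$ exact; conversely, if $F_{\lambda}[\phi, w]$ is exact, then the lower-estimate application makes $F_{\lambda}[\phi_2 w]$, hence $F_{\lambda}[\phi'_+(0) w]$, exact. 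The two displayed bounds are precisely those obtained above.

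The main obstacle is the careful bookkeeping of the shift under the internal rescaling $\psi_0 w$ hardwired into the preceding theorem: one must feed in the shift $w$ in the first application and $\phi_2 w$ in the second, precisely so that after rescaling the hypothesis collapses to the exactness of the function one already controls. A secondary point worth emphasising is that, with linear functions, the preceding theorem transfers exactness only from a smaller shift to a larger one, so the genuinely two-sided equivalence among the $F_{\lambda}[c w]$ cannot come from that theorem alone and must be borrowed from the reduction to $G_{\sigma}$.
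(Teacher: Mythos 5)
Your proposal is correct and takes essentially the approach the paper intends: the corollary is stated as a direct consequence of the preceding theorem, and your two dual applications of it (new function $\phi$ with $\psi_0 = \phi_1$ and shift $w$ for the upper bound; new function $t \mapsto t$ with $\psi_0 = 1/\phi_2$ and shift $\phi_2 w$ for the lower bound), supplemented by the shift-independence of exactness coming from the local analogue of Theorem~\ref{Thrm_EquivExactness_NonSimple}, are exactly the required ingredients. Your closing remark---that the preceding theorem alone transfers exactness only from smaller to larger shifts, so the two-sided equivalence with $F_{\lambda}[\phi'_+(0) w]$ must be borrowed from the reduction to $G_{\sigma}$---correctly identifies the one step that cannot be obtained from that theorem by itself.
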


As in the case $w = 0$, the corollary above can be extended to the case of global exactness under the assumption that
the function $\phi$ is convex.

\begin{theorem}
Let $\phi \colon [0, + \infty] \to [0, +\infty]$ be a nondecreasing convex function such that $\phi(t) = 0$ iff
$t = 0$, and let there exist the right-hand side derivative $\phi'_+(0)$ of $\phi$ at $0$ such that
$\phi'_+(0) > 0$. Suppose also that the penalty function $F_{\lambda}[\phi'_+(0) w]$ is exact. Then the penalty
function $F_{\lambda} [\phi, w]$ is also exact and 
$\lambda^*(\phi, w) \le \lambda^*(\phi'_+(0) w) / \phi'_+(0)$.
\end{theorem}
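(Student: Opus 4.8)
The plan is to follow the argument of Theorem~\ref{Theorem_Convexity} essentially verbatim, now carrying the shift $w$ through every step. Since $\phi$ is convex with right-hand derivative $\phi'_+(0)$ at the origin, the supporting-line inequality $\phi(t) \ge \phi'_+(0)\, t$ holds for all $t \ge 0$ (\cite{Rockafellar}, Theorem 23.1). Substituting $t = d(0, \Phi(x) - \varepsilon w)^2$ into the definition \eqref{NonlinearShiftedPenFunc} immediately yields, for every $x \in A$ and $\varepsilon > 0$,
$$
  F_{\lambda}[\phi, w](x, \varepsilon) \ge
  f(x) + \frac{\phi'_+(0)}{\varepsilon} d(0, \Phi(x) - \varepsilon w)^2 + \lambda \varepsilon.
$$

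The key observation is that the right-hand side is exactly a rescaled \emph{linear} (i.e.\ $\phi(t) \equiv t$) shifted penalty function. Setting $\delta = \varepsilon / \phi'_+(0)$, one has $\varepsilon w = \delta\, \phi'_+(0) w$, so the shifted distance is left unchanged: $d(0, \Phi(x) - \varepsilon w) = d(0, \Phi(x) - \delta\, \phi'_+(0) w)$. A direct substitution then identifies the lower bound above with $F_{\lambda \phi'_+(0)}[\phi'_+(0) w](x, \delta)$, that is,
$$
  F_{\lambda}[\phi, w](x, \varepsilon) \ge
  F_{\lambda \phi'_+(0)}[\phi'_+(0) w]\Big( x, \tfrac{\varepsilon}{\phi'_+(0)} \Big)
  \quad \forall (x, \varepsilon) \in A \times \mathbb{R}_+,
$$
the boundary case $\varepsilon = 0$ being trivial since both sides reduce to $f(x)$ for feasible $x$ and to $+\infty$ otherwise (and the case $\Phi(x) = \emptyset$ is handled by the convention $d = +\infty$, which makes both sides $+\infty$). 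The essential point is that the shift term scales in precisely the same way as $\varepsilon$, so replacing $w$ by $\phi'_+(0) w$ in the linear penalty exactly compensates for the rescaling of the penalty parameter; this is why the hypothesis is stated in terms of $F_{\lambda}[\phi'_+(0) w]$ rather than $F_{\lambda}[w]$.

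It then remains to invoke the assumed global exactness of $F_{\lambda}[\phi'_+(0) w]$. Fix any $\lambda > \lambda^*(\phi'_+(0) w) / \phi'_+(0)$, so that $\lambda \phi'_+(0) > \lambda^*(\phi'_+(0) w)$. By Remark~\ref{Rmrk_ExactPenCond} the linear shifted penalty then satisfies $\inf_{x \in A,\, \delta \ge 0} F_{\lambda \phi'_+(0)}[\phi'_+(0) w](x, \delta) = f^*$, and $f$ attains its global minimum on $\Omega$ (a condition depending only on $f$ and $\Omega$, hence inherited directly). Because $\delta = \varepsilon / \phi'_+(0)$ is an order-preserving bijection of $\mathbb{R}_+$ onto itself, minimizing the lower bound over $\varepsilon \ge 0$ coincides with minimizing the linear penalty over $\delta \ge 0$; therefore $\inf_{x \in A,\, \varepsilon \ge 0} F_{\lambda}[\phi, w](x, \varepsilon) \ge f^*$. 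The reverse inequality $\le f^*$ follows by letting $\varepsilon \to +0$ at feasible points, so equality holds, and Remark~\ref{Rmrk_ExactPenCond} yields both the global exactness of $F_{\lambda}[\phi, w]$ and, upon letting $\lambda$ decrease to $\lambda^*(\phi'_+(0) w)/\phi'_+(0)$, the estimate $\lambda^*(\phi, w) \le \lambda^*(\phi'_+(0) w) / \phi'_+(0)$.

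I expect no serious obstacle: the argument is structurally identical to the $w = 0$ case of Theorem~\ref{Theorem_Convexity}, and the only genuinely new point to verify is the consistent rescaling of the shift, namely that $\varepsilon w = \delta\, \phi'_+(0) w$ under $\delta = \varepsilon / \phi'_+(0)$. The mild bookkeeping at the boundary $\varepsilon = 0$ and for empty $\Phi(x)$ contributes nothing substantive.
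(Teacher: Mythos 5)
Your proof is correct and takes essentially the approach the paper intends: this theorem is stated in the paper without proof as the direct extension of Theorem~\ref{Theorem_Convexity}, and you reproduce that argument (the convexity bound $\phi(t) \ge \phi'_+(0)\, t$ followed by the rescaling $\varepsilon \mapsto \varepsilon / \phi'_+(0)$ and an appeal to Remark~\ref{Rmrk_ExactPenCond}), correctly verifying the one genuinely new point — that the shift rescales consistently, $\varepsilon w = \bigl(\varepsilon/\phi'_+(0)\bigr)\,\phi'_+(0) w$ — which is exactly why the hypothesis is phrased in terms of $F_{\lambda}[\phi'_+(0) w]$ rather than $F_{\lambda}[w]$.
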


\subsection{Nonlinear Dependence on $\varepsilon$}

Let, as above, $Y$ be a normed space, $y_0 = 0$, and let $\phi \colon [0, + \infty] \to [0, +\infty]$ and
$\beta \colon [0, + \infty) \to [0, + \infty]$ be nondecreasing functions such that $\phi(t) = 0$ iff $t = 0$, and 
$\beta(t) = 0$ iff $t = 0$. For any $w \in Y$ define the penalty function
$$
  F_{\lambda}[\phi, w, \beta](x, \varepsilon) = 
    f(x) + \varepsilon^{-1} \phi(d(0, \Phi(x) - \varepsilon w)^2) + \lambda \beta(\varepsilon)
    \quad \forall \varepsilon > 0.
$$
Denote the ex.p. p. of this function at a point $x^* \in \Omega$ by $\lambda^*(x^*, \phi, w, \beta)$, and denote its
global ex.p.p. by $\lambda^*(\phi, w, \beta)$.

Let us show that under some natural assumptions the case of nonlinear function $\beta$ can be easily reduced to the
case $\beta(t) \equiv t$.

\begin{theorem}
Let $x^* \in \dom f$ be a locally optimal solution of the problem ($\mathcal{P}$), and let there exist the right-hand
side derivative $\beta'_+(0)$ of $\beta$ at $0$ such that $\beta'_+(0) > 0$. Then the penalty function 
$F_{\lambda}[\phi, w, \beta]$ is exact at $x^*$ if and only if the penalty function $F_{\lambda}[\phi, w]$ is
exact at this point, and $\lambda^*(\phi, w, \beta) = \lambda^*(\phi, w) / \beta'_+(0)$.
\end{theorem}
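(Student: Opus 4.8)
The plan is to exploit the single structural fact that $\beta$ behaves, near the origin, like the linear function $t \mapsto b\,t$ with $b := \beta'_+(0) > 0$, together with the observation that local exactness at $x^*$ only tests the penalty function for small values of $\varepsilon$. Because replacing the linear term $\lambda\varepsilon$ by $\lambda\beta(\varepsilon)$ touches \emph{only} the $\varepsilon$-variable and leaves the constraint term $\varepsilon^{-1}\phi(d(0,\Phi(x)-\varepsilon w)^2)$ untouched, I expect to obtain the whole statement by a purely pointwise comparison. This is precisely why, in contrast to Theorem~\ref{Thrm_LocalExactTrasform_Nonlinear}, no continuity assumption on the constraint map is needed here: the minimisation over $\varepsilon$ is never carried out, so the location of the minimiser is irrelevant.

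First I would fix $\delta \in (0, b)$ and use the definition of the right-hand derivative (together with $\beta(0) = 0$) to produce $\varepsilon_1 > 0$ with $(b - \delta)\varepsilon \le \beta(\varepsilon) \le (b + \delta)\varepsilon$ for all $\varepsilon \in (0, \varepsilon_1]$. Multiplying by $\lambda \ge 0$ and adding the common term $f(x) + \varepsilon^{-1}\phi(d(0,\Phi(x)-\varepsilon w)^2)$ to each side yields the two-sided sandwich
\begin{equation} \label{eq:BetaSandwich}
  F_{\lambda(b-\delta)}[\phi, w](x, \varepsilon) \le F_{\lambda}[\phi, w, \beta](x, \varepsilon) \le F_{\lambda(b+\delta)}[\phi, w](x, \varepsilon),
\end{equation}
valid for every $x \in A$ and every $\varepsilon \in (0, \varepsilon_1]$ (the inequalities hold trivially, with all three quantities equal to $+\infty$, when the constraint term is infinite), while at $\varepsilon = 0$ all three functions coincide with $f(x)$ for feasible $x$ and with $+\infty$ otherwise, independently of $\beta$.

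With \eqref{eq:BetaSandwich} in hand, both implications follow by intersecting neighbourhoods. Suppose $F_{\lambda}[\phi, w]$ is exact at $x^*$; for any $\lambda$ with $\lambda(b - \delta) > \lambda^*(x^*, \phi, w)$, local exactness supplies a neighbourhood $U$ of $x^*$ and some $\varepsilon_0 \in (0, \varepsilon_1]$ with $F_{\lambda(b-\delta)}[\phi, w](x, \varepsilon) \ge f(x^*)$ on $(U \cap A) \times [0, \varepsilon_0]$, and the left inequality in \eqref{eq:BetaSandwich} then gives $F_{\lambda}[\phi, w, \beta](x, \varepsilon) \ge f(x^*)$ on the same set; hence $F_{\lambda}[\phi, w, \beta]$ is exact at $x^*$ and $\lambda^*(x^*, \phi, w, \beta) \le \lambda^*(x^*, \phi, w)/(b - \delta)$. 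Conversely, the right inequality in \eqref{eq:BetaSandwich} converts exactness of $F_{\lambda}[\phi, w, \beta]$ at $x^*$ into exactness of $F_{\lambda(b+\delta)}[\phi, w]$, yielding $\lambda^*(x^*, \phi, w) \le (b + \delta)\,\lambda^*(x^*, \phi, w, \beta)$. Letting $\delta \to +0$ in the two estimates produces the opposite inequalities and hence the claimed equality $\lambda^*(x^*, \phi, w, \beta) = \lambda^*(x^*, \phi, w)/\beta'_+(0)$, while the two implications together establish the stated equivalence. The only care required is routine bookkeeping: shrinking $\varepsilon_0$ below $\varepsilon_1$, treating the $\varepsilon = 0$ boundary separately (where the condition reduces to $x^*$ being a local minimiser of $(\mathcal{P})$ and is the same for every choice of $\beta$), and justifying the passage $\delta \to +0$. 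I do not foresee a genuine obstacle here, precisely because the comparison \eqref{eq:BetaSandwich} is entirely pointwise in $(x,\varepsilon)$.
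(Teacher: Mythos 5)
Your proposal is correct and follows essentially the same route as the paper's own proof: both sandwich $\beta(\varepsilon)$ between $(\beta'_+(0)-\eta)\varepsilon$ and $(\beta'_+(0)+\eta)\varepsilon$ near zero, deduce the pointwise two-sided comparison $F_{(\beta'_+(0)-\eta)\lambda}[\phi,w] \le F_{\lambda}[\phi,w,\beta] \le F_{(\beta'_+(0)+\eta)\lambda}[\phi,w]$ on $A \times [0,\varepsilon_0)$, and obtain the equivalence and the equality of exact penalty parameters by letting the perturbation tend to zero. Your write-up merely spells out the bookkeeping (shrinking $\varepsilon_0$, the $\varepsilon=0$ boundary, the two directions separately) that the paper leaves implicit.
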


\begin{proof}
Since $\beta'_+(0) > 0$, for any $\eta \in (0, \beta'_+(0))$ there exists $\varepsilon_0 > 0$ such that 
$$
  0 \le ( \beta'_+(0) - \eta ) \varepsilon \le \beta(\varepsilon) \le ( \beta'_+(0) + \eta ) \varepsilon \quad
  \forall \varepsilon \in [0, \varepsilon_0),
$$
which yields that for any $(x, \varepsilon) \in A \times [0, \varepsilon_0)$ one has
$$
  F_{(\beta'_+(0) - \eta) \lambda}[\phi, w](x, \varepsilon) \le F_{\lambda}[\phi, w, \beta](x, \varepsilon) \le
  F_{(\beta'_+(0) + \eta) \lambda}[\phi, w](x, \varepsilon).
$$
Therefore $F_{\lambda}[\phi, w, \beta]$ is exact at $x^*$ iff $F_{\lambda}[\phi, w]$ is exact at this point, and
$\lambda^*(x^*, \phi, w, \beta) = \lambda^*(x^*, \phi, w) / \beta'_+(0)$ due to the fact that 
$\eta \in (0, \beta'_+(0))$ was chosen arbitrarily.  
\end{proof}

The previous theorem can be extended to the case of global exactness under the assumption that the function $\beta$
is convex. In this case, one easily obtains the estimate 
$\lambda^*(\phi, w, \beta) \le \lambda^*(\phi, w) / \beta'_+(0)$. However, as in the case of 
Theorem~\ref{Theorem_Convexity}, the convexity assumption can be discarded.

Arguing in a similar way to the proof of Lemma~\ref{LemmaExPenFunc} one can verify that the following result holds true.

\begin{lemma} \label{Lemma_EpsReduction}
The penalty function $F_{\lambda}[\phi, w, \beta]$ is globally exact if and only if there exists $\lambda_1 \ge 0$ such
that the function $F_{\lambda_1}[\phi, w, \beta]$ is bounded below on $A \times \mathbb{R}_+$, and there exists
$\varepsilon_0 > 0$ such that
$$
  F_{\lambda_2}[\phi, w, \beta] \ge f^* \quad \forall (x, \varepsilon) \in A \times [0, \varepsilon_0)
$$
for some $\lambda_2 \ge 0$, where $f^* = \inf_{x \in \Omega} f(x)$.
\end{lemma}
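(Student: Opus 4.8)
The plan is to follow the proof of Lemma~\ref{LemmaExPenFunc} almost verbatim, with the role played there by the $x$-dependent term $\sigma\phi(d(y_0,\Phi(x))^2)$ now taken over by the $\varepsilon$-dependent term $\lambda\beta(\varepsilon)$. The crucial structural fact is that the penalty parameter $\lambda$ enters $F_{\lambda}[\phi,w,\beta]$ only through the additive term $\lambda\beta(\varepsilon)$, so that for any $\lambda_1,\lambda \ge 0$ one has the exact decomposition
\[
  F_{\lambda}[\phi,w,\beta](x,\varepsilon) = F_{\lambda_1}[\phi,w,\beta](x,\varepsilon) + (\lambda - \lambda_1)\beta(\varepsilon).
\]
Since $\beta$ is nondecreasing with $\beta(t)=0$ iff $t=0$, the quantity $\delta := \beta(\varepsilon_0)$ is strictly positive and $\beta(\varepsilon) \ge \delta$ for all $\varepsilon \ge \varepsilon_0$. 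This $\delta$ plays exactly the role of the $\delta$ in (\ref{ExactOnOmegaDelta}), the ``small'' set $\Omega_{\delta}$ being replaced by the strip $A \times [0,\varepsilon_0)$.

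For the forward implication, which is the easy direction, I would note that if $F_{\lambda}[\phi,w,\beta]$ is globally exact then by Remark~\ref{Rmrk_ExactPenCond} every $\lambda > \lambda^*(\phi,w,\beta)$ satisfies $F_{\lambda}[\phi,w,\beta](x,\varepsilon) \ge f^*$ for all $(x,\varepsilon) \in A \times \mathbb{R}_+$. Taking $\lambda_1 = \lambda_2 = \lambda$ simultaneously yields that $F_{\lambda_1}[\phi,w,\beta]$ is bounded below (by $f^*$) and that the displayed inequality holds on $A \times [0,\varepsilon_0)$ for every $\varepsilon_0 > 0$.

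For the converse, assume the two hypotheses with parameters $\lambda_1,\lambda_2$ and threshold $\varepsilon_0$, and set $c := \inf_{(x,\varepsilon) \in A \times \mathbb{R}_+} F_{\lambda_1}[\phi,w,\beta](x,\varepsilon) > -\infty$. I would split $A \times \mathbb{R}_+$ into $\{\varepsilon \ge \varepsilon_0\}$ and $\{\varepsilon < \varepsilon_0\}$. On $\{\varepsilon \ge \varepsilon_0\}$ the decomposition gives
\[
  F_{\lambda}[\phi,w,\beta](x,\varepsilon) = F_{\lambda_1}[\phi,w,\beta](x,\varepsilon) + (\lambda-\lambda_1)\beta(\varepsilon) \ge c + (\lambda-\lambda_1)\delta \ge f^*
\]
whenever $\lambda \ge \widehat{\lambda} := \lambda_1 + (f^*-c)/\delta$; on $\{\varepsilon < \varepsilon_0\}$, since $\beta \ge 0$, the decomposition $F_{\lambda}[\phi,w,\beta] = F_{\lambda_2}[\phi,w,\beta] + (\lambda-\lambda_2)\beta(\varepsilon)$ together with the assumed inequality $F_{\lambda_2}[\phi,w,\beta]\ge f^*$ gives $F_{\lambda}[\phi,w,\beta](x,\varepsilon) \ge f^*$ for every $\lambda \ge \lambda_2$. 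Hence $F_{\lambda}[\phi,w,\beta] \ge f^*$ on all of $A \times \mathbb{R}_+$ for $\lambda \ge \max\{\lambda_2, \widehat{\lambda}\}$. Since $F_{\lambda}[\phi,w,\beta](x,0) = f(x)$ for feasible $x$ and $f^* = \inf_{\Omega} f$, this gives $\inf_{(x,\varepsilon)} F_{\lambda}[\phi,w,\beta] = f^*$, and it remains to invoke Remark~\ref{Rmrk_ExactPenCond}.

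I do not expect any serious obstacle, as the argument is a direct transcription of Lemma~\ref{LemmaExPenFunc}. The only point requiring a moment's care is the verification that $\delta = \beta(\varepsilon_0) > 0$ and that $\beta(\varepsilon) \ge \delta$ on $\{\varepsilon \ge \varepsilon_0\}$; this monotone positivity of $\beta$ away from the origin is exactly what allows an increase of $\lambda$ to dominate the region of large $\varepsilon$, mirroring the bound $\phi(d(y_0,\Phi(x))^2) \ge \delta$ off $\Omega_{\delta}$ in the original lemma.
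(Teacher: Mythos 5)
Your proposal is correct and is essentially the paper's own argument: the paper gives no separate proof of this lemma, stating only that one argues ``in a similar way to the proof of Lemma~\ref{LemmaExPenFunc}'', and your write-up is precisely that transcription, with the decomposition $F_{\lambda}=F_{\lambda_1}+(\lambda-\lambda_1)\beta(\varepsilon)$, the threshold $\delta=\beta(\varepsilon_0)>0$ playing the role of the $\delta$ in \eqref{ExactOnOmegaDelta}, and the final appeal to Remark~\ref{Rmrk_ExactPenCond}. The only (cosmetic) adjustment is to take $\lambda\ge\max\{\lambda_1,\lambda_2,\widehat{\lambda}\}$ so that $(\lambda-\lambda_1)\beta(\varepsilon)\ge(\lambda-\lambda_1)\delta$ is legitimate even when $f^*<c$; the paper's proof of Lemma~\ref{LemmaExPenFunc} glosses over the same point.
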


\begin{theorem} \label{Theorem_NonlinearDepnOnEps}
Let $\gamma \colon [0, + \infty) \to [0, + \infty]$ be a nondecreasing function such that $\gamma(\varepsilon) = 0$ iff
$\varepsilon = 0$, and let there exist $\gamma_0 > 0$ and $\varepsilon_0 \ge 0$ such that $\gamma(\varepsilon) \ge
\gamma_0 \beta(\varepsilon)$ for all $\varepsilon \in (0, \varepsilon_0)$.
Suppose also that the penalty function $F_{\lambda}[\phi, w, \beta]$ is globally exact, and there exists $\lambda_0$
such that the penalty function $F_{\lambda_0}[\phi, w, \gamma]$ is bounded below on $A \times \mathbb{R}_+$. Then the
penalty function $F_{\lambda}[\phi, w, \gamma]$ is globally exact.
\end{theorem}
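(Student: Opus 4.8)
The plan is to derive the global exactness of $F_{\lambda}[\phi, w, \gamma]$ directly from Lemma~\ref{Lemma_EpsReduction}, which reduces global exactness to two purely local requirements: boundedness below for a single value of the parameter, and the inequality $F_{\lambda_2} \ge f^*$ merely on a one-sided neighbourhood of $\varepsilon = 0$. The reason for routing the argument through this lemma, rather than imitating the convex argument sketched after Theorem~\ref{Theorem_Convexity}, is that the comparison hypothesis $\gamma(\varepsilon) \ge \gamma_0 \beta(\varepsilon)$ is only assumed for small $\varepsilon$, which is exactly the region the lemma is sensitive to.

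First I would apply Lemma~\ref{Lemma_EpsReduction} to the globally exact function $F_{\lambda}[\phi, w, \beta]$. This produces some $\lambda' \ge 0$ and some $\varepsilon_1 > 0$ for which
$$
  F_{\lambda'}[\phi, w, \beta](x, \varepsilon) \ge f^* \quad \forall (x, \varepsilon) \in A \times [0, \varepsilon_1),
$$
where $f^* = \inf_{x \in \Omega} f(x)$. Next, on the shrunk interval $\varepsilon \in (0, \delta)$ with $\delta = \min\{ \varepsilon_0, \varepsilon_1 \}$, I would invoke the pointwise bound $\gamma(\varepsilon) \ge \gamma_0 \beta(\varepsilon)$ to obtain, for every $x \in A$,
$$
  F_{\lambda}[\phi, w, \gamma](x, \varepsilon) = f(x) + \varepsilon^{-1} \phi(d(0, \Phi(x) - \varepsilon w)^2) + \lambda \gamma(\varepsilon) \ge F_{\lambda \gamma_0}[\phi, w, \beta](x, \varepsilon).
$$
Choosing the penalty parameter so large that $\lambda \gamma_0 \ge \lambda'$, and using that $\beta \ge 0$ (so that raising the parameter only increases the value), gives $F_{\lambda \gamma_0}[\phi, w, \beta] \ge F_{\lambda'}[\phi, w, \beta] \ge f^*$ throughout $(0, \delta)$. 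The boundary case $\varepsilon = 0$ is immediate from the standing convention, since $F_{\lambda}[\phi, w, \gamma](x, 0) = f(x) \ge f^*$ for feasible $x$ and equals $+\infty$ otherwise. Thus the second requirement of Lemma~\ref{Lemma_EpsReduction} holds for $F_{\lambda}[\phi, w, \gamma]$ with $\lambda_2 = \lambda' / \gamma_0$ and neighbourhood radius $\delta$.

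The first requirement of Lemma~\ref{Lemma_EpsReduction}, namely boundedness below of $F_{\lambda_1}[\phi, w, \gamma]$ for some $\lambda_1 \ge 0$, is supplied verbatim by the standing hypothesis: one simply takes $\lambda_1 = \lambda_0$. With both requirements verified, Lemma~\ref{Lemma_EpsReduction} yields the global exactness of $F_{\lambda}[\phi, w, \gamma]$, as claimed. I do not expect a genuine obstacle in the calculations; the only conceptual point is recognising that the local comparison $\gamma \ge \gamma_0 \beta$ says nothing about the behaviour of the function for large $\varepsilon$, which is precisely why the separate boundedness assumption involving $\lambda_0$ is indispensable and cannot be dropped.
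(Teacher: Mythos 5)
Your proof is correct and follows essentially the same route as the paper: both arguments combine the comparison $\gamma(\varepsilon) \ge \gamma_0 \beta(\varepsilon)$ near zero with the global exactness of $F_{\lambda}[\phi, w, \beta]$ to get $F_{\lambda}[\phi, w, \gamma] \ge f^*$ on $A \times [0, \delta)$, and then invoke Lemma~\ref{Lemma_EpsReduction} together with the boundedness-below hypothesis. The only cosmetic difference is that you extract the near-zero inequality for the $\beta$-penalty via the forward direction of Lemma~\ref{Lemma_EpsReduction}, whereas the paper gets it directly from the definition of global exactness (taking $\gamma_0\lambda > \lambda^*(\phi, w, \beta)$), which yields the same bound on all of $A \times \mathbb{R}_+$.
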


\begin{proof}
Applying the inequality $\gamma(\varepsilon) \ge \gamma_0 \beta(\varepsilon)$, and the fact that the penalty function
$F_{\lambda}[\phi, w, \beta]$ is globally exact one gets that for any $\lambda > \lambda^*(\phi, w, \beta) / \gamma_0$
the following inequalities hold true
$$
  F_{\lambda}[\phi, w, \gamma](x, \varepsilon) \ge F_{\gamma_0 \lambda}[\phi, w, \beta](x, \varepsilon) \ge f^*
  \quad \forall (x, \varepsilon) \in A \times [0, \varepsilon_0).
$$
Then taking into account Lemma~\ref{Lemma_EpsReduction} one obtains that the penalty function 
$F_{\lambda}[\phi, w, \gamma]$ is globally exact.	 
\end{proof}

\begin{corollary}
Let there exist the right-hand side derivative of $\beta$ at $0$ such that $\beta'_+(0) > 0$. Suppose also that there
exists $\lambda_0 \ge 0$ such that the functions $F_{\lambda_0}[\phi, w, \beta]$ and $F_{\lambda_0}[\phi, w]$ are
bounded below on $A \times \mathbb{R}_+$. Then for the penalty function $F_{\lambda}[\phi, w, \beta]$ to be globally
exact it is necessary and sufficient that the penalty function $F_{\lambda}[\phi, w]$ is globally exact.
\end{corollary}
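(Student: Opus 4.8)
The plan is to derive both implications from a single earlier result, Theorem~\ref{Theorem_NonlinearDepnOnEps}, applied twice with the roles of $\beta$ and the identity function interchanged. The key observation is that $F_{\lambda}[\phi, w]$ is, by definition, nothing but $F_{\lambda}[\phi, w, \beta]$ with $\beta(\varepsilon) \equiv \varepsilon$; so the asserted equivalence is really a comparison between the penalty functions built from $\beta$ and from the identity, and Theorem~\ref{Theorem_NonlinearDepnOnEps} is exactly the tool for transferring global exactness between two choices of the $\varepsilon$-term that are comparable near the origin.

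First I would record the two-sided linear comparison near the origin. Since $\beta(0) = 0$ and $\beta'_+(0) > 0$ exists, one has $\beta'_+(0) = \lim_{\varepsilon \to +0} \beta(\varepsilon)/\varepsilon$, so for every $\eta \in (0, \beta'_+(0))$ there is $\varepsilon_1 > 0$ with
$$
  (\beta'_+(0) - \eta)\, \varepsilon \le \beta(\varepsilon) \le (\beta'_+(0) + \eta)\, \varepsilon
  \quad \forall \varepsilon \in (0, \varepsilon_1).
$$
These are the same bounds already exploited in the proof of the local transformation theorem for $\beta$. The left inequality gives $\beta(\varepsilon) \ge \gamma_0\, \varepsilon$ with $\gamma_0 = \beta'_+(0) - \eta > 0$, and the right inequality gives $\varepsilon \ge \gamma_0'\, \beta(\varepsilon)$ with $\gamma_0' = 1/(\beta'_+(0) + \eta) > 0$, both valid on $(0, \varepsilon_1)$.

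For the forward implication, assume $F_{\lambda}[\phi, w, \beta]$ is globally exact. I would apply Theorem~\ref{Theorem_NonlinearDepnOnEps} with the given $\beta$ as the ``exact'' function and with the identity $\gamma(\varepsilon) \equiv \varepsilon$ as the target $\gamma$: its three hypotheses hold because the identity is nondecreasing and vanishes only at $0$, the bound $\varepsilon \ge \gamma_0'\, \beta(\varepsilon)$ on $(0, \varepsilon_1)$ supplies the required comparison, and $F_{\lambda_0}[\phi, w]$ is bounded below by assumption. Hence $F_{\lambda}[\phi, w]$ is globally exact. For the converse, assume $F_{\lambda}[\phi, w]$ is globally exact and apply the same theorem the other way: now the identity plays the role of the exact function, the given $\beta$ plays the role of $\gamma$, the comparison $\beta(\varepsilon) \ge \gamma_0\, \varepsilon$ on $(0, \varepsilon_1)$ is the required inequality, and $F_{\lambda_0}[\phi, w, \beta]$ is bounded below by assumption; Theorem~\ref{Theorem_NonlinearDepnOnEps} then yields global exactness of $F_{\lambda}[\phi, w, \beta]$.

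There is essentially no analytic obstacle here; the entire content is in verifying that the hypotheses of Theorem~\ref{Theorem_NonlinearDepnOnEps} are met in each direction. The one point that must not be overlooked is the role of the two boundedness-below assumptions: they are precisely the third hypothesis of Theorem~\ref{Theorem_NonlinearDepnOnEps}, needed once with $\gamma$ the identity and once with $\gamma = \beta$, and without them the transfer of global exactness could fail, since global exactness requires the penalty function to \emph{attain}, rather than merely approach, the optimal value $f^*$.
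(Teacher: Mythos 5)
Your proof is correct and is exactly the argument the paper intends: the corollary is stated as an immediate consequence of Theorem~\ref{Theorem_NonlinearDepnOnEps}, applied once in each direction with the identity and $\beta$ swapping roles, using the two-sided linear bounds $(\beta'_+(0)-\eta)\varepsilon \le \beta(\varepsilon) \le (\beta'_+(0)+\eta)\varepsilon$ near zero and the two boundedness-below assumptions as the respective third hypotheses. Nothing is missing.
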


Let us illustrate Theorem~\ref{Theorem_NonlinearDepnOnEps} with a simple example.

\begin{example}
Let $\phi(t) \equiv t$, $w = 0$ and $\beta(\varepsilon) = 2 \sqrt{\varepsilon}$. Then for any $\varepsilon > 0$ 
the penalty function $F_{\lambda}[\phi, w, \beta]$ takes the form
\begin{equation} \label{Example_NonlinDepend_Eps}
  F_{\lambda}[\phi, w, \beta](x, \varepsilon) = 
  f(x) + \frac{1}{\varepsilon} d(0, \Phi(x))^2 + 2 \lambda \sqrt{\varepsilon}.
\end{equation}
From Theorems~\ref{Thrm_GlobalExactEquiv_Simple} and \ref{Theorem_NonlinearDepnOnEps} it follows that if the standard
penalty function $G_{\sigma}(x) = f(x) + \sigma d(0, \Phi(x))$ is exact, and the penalty function 
$F_{\lambda}[\phi, w, \beta]$ is bounded below on $A \times \mathbb{R}_+$ for some $\lambda \ge 0$, then the
penalty function $F_{\lambda}[\phi, w, \beta]$ is globally exact as well.

Note also that one can easily obtain a direct characterization of the exactness of the penalty function
$F_{\lambda}[\phi, w, \beta]$. Minimizing the right-hand side of (\ref{Example_NonlinDepend_Eps}) with respect to
$\varepsilon > 0$ one gets that
$$
  \min_{\varepsilon > 0} F_{\lambda}[\phi, w, \beta](x, \varepsilon) = 
  f(x) + 3 \lambda^{\frac{2}{3}} d(0, \Phi(x)^{\frac{2}{3}}.
$$
Therefore the penalty function $F_{\lambda}[\phi, w, \beta]$ is globally exact iff the penalty function 
$H_{\theta}(x) = f(x) + \theta d(0, \Phi(x))^{2/3}$ is exact, and $\lambda^*(\phi, w, \beta) = (\theta^* / 3)^{3/2}$,
where $\theta^*$ is the ex.p.p. of the penalty function $H_{\theta}$.
\end{example}

\bibliographystyle{abbrv}  
\bibliography{SmoothPenFunc_II_bibl}

\end{document}